\newtheorem{thm}{Theorem}
\newtheorem{cor}[thm]{Corollary}
\newtheorem{lem}[thm]{Lemma}
\newtheorem{pps}[thm]{Proposition}
\theoremstyle{definition}
\newtheorem*{theorem*}{Theorem}
\theoremstyle{remark}
\numberwithin{equation}{section}
\def\<{\langle}
\def\>{\rangle}
\begin{document}
	
\title[]{Complex symmetry of composition operators on weighted Bergman spaces  }

\author{Osmar R Severiano}%

\address{IMECC, Universidade Estadual de Campinas, Campinas-SP, Brazil.}

\email{$\mathrm{osmar.rrseveriano@gmail.br}$}

\begin{abstract} In this article, we study the complex symmetry of compositions operators $C_{\phi}f=f\circ \phi$ induced on weighted Bergman spaces $A^2_{\beta}(\mathbb{D}),\ \beta\geq -1,$ by analytic self-maps of the unit disk. One of ours main results shows that $\phi$ has a fixed point in $\mathbb{D}$ whenever $C_{\phi}$ is complex symmetric. Our works establishes a strong relation between complex symmetry and cyclicity. By assuming $\beta\in \mathbb{N}$ and $\phi$ is an elliptic automorphism of $\mathbb{D}$ which not a rotation, we show that $C_{\phi}$ is not complex symmetric whenever $\phi$ has order greater than $2(3+\beta).$

\end{abstract}

{\subjclass[2010]{Primary; Secondary}}
\keywords{Complex symmetry, composition operator, weighted Bergman space, cyclicity, linear fractional maps.}
\maketitle{}

\section*{Introduction}
If $X$ is a Banach space of analytic functions on an open set $U\subset \mathbb{D}$ and if $\phi$ is an analytic self-map of $U,$ the \textit{composition operator} with \textit{symbol} $\phi$ is defined by $C_{\phi}f=f \circ \phi$ for any $f \in X.$ The emphasis here is on the comparison of properties of $C_{\phi}$ with those of symbols $\phi.$ Composition operators have been studied on a variety of spaces and the majority of the literature is concerned with sets $U$ that are open and connected. It clear that the set $U$ strongly influences the properties of the operator $C_{\phi}.$ For example if $U$ is the open unit disk $\mathbb{D},$ it is well-known that the operator $C_{\phi}$ is bounded on the Hardy space $H^2(\mathbb{D}).$ In general this result holds for each weighted Bergman space $A^2_{\beta}(\mathbb{D})$ (see \cite[page 532]{anna}). We refer to \cite{cow} and \cite{hed} for more details about the Hardy and weighted Bergman space respectively.

 The concept of \textit{complex symmetric operators} on separable Hilbert spaces is a natural generalization of
complex symmetric matrices, and their general study was initiated by Garcia, Putinar, and Wogen (see \cite{Garc2},\cite{Garc3}, \cite{Wogen1} and \cite{Wogen2}). The class of complex symmetric operators includes
a large number of concrete examples including all normal operators, binormal operators, Hankel operators, finite Toeplitz matrices, compressed shift operators, and
the Volterra integral operator.

The study of complex symmetry of composition operators on the Hardy
space of the unit disc $H^2 (\mathbb{D})$ was initiated by Garcia and Hammond in \cite{Garc1}. In this work, they  showed that for each $\alpha\in \mathbb{D},$ the involutive automorphism of $\mathbb{D}$ given by 
\begin{align}\label{0}
\phi_{\alpha}(z):=\frac{\alpha-z}{1-\overline{\alpha}z}
\end{align}
induces a non-normal complex symmetric composition operator. In particular, we see that the class of
complex symmetric operators is strictly larger than that of the normal operators. Another important work on complex symmetry of composition operators on $H^2(\mathbb{D})$ was realized by P. S. Bourdon and S. W. Noor in \cite{wal}. In this work they showed that if $\phi$ is an elliptic automorphism of order $N>3$ (including $N=\infty$) then $C_{\phi}$ is not complex symmetric \cite[Proposition 3.1.]{wal} and \cite[Proposition 3.3]{wal}. It is worth mentioning that for a complete classification of the automorphisms of $\mathbb{D}$ that induce complex symmetric composition operators it is sufficient to classify the elliptic automorphism of order 3. Based on \cite{wal}, T. Eklund, M. Lindström and P. Mleczko also tried to classify which automorphisms of $\mathbb{D}$ induce complex symmetric composition operators on the classical Bergman space $A^2.$ They showed that if $\phi$ is an elliptic automorphism of order $N>5$ then $C_{\phi}$ is not complex symmetric.  

Our first main result is the following:
\begin{thm}\label{pps1}
Let $\phi$ be an analytic self-map of $\mathbb{D}.$ If $C_{\phi}$ is complex symmetric on $A^2_{\beta}(\mathbb{D}),$ then $\phi$ must fix a point in $\mathbb{D}.$ 
\end{thm}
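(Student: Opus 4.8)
The plan is to argue by contrapositive: assuming $\phi$ is an analytic self-map of $\mathbb{D}$ with no fixed point in $\mathbb{D}$, I will show $C_\phi$ cannot be complex symmetric on $A^2_\beta(\mathbb{D})$. Recall that $T$ is complex symmetric means there is a conjugation $J$ (a conjugate-linear, isometric involution) with $JTJ = T^*$. The first step is to extract spectral information: a complex symmetric operator is unitarily equivalent (via $J$) to its adjoint, so $\sigma(C_\phi) = \overline{\sigma(C_\phi^*)} = \sigma(C_\phi)$ trivially, but more usefully the point spectrum and approximate point spectrum of $C_\phi$ and $C_\phi^*$ coincide in modulus, and eigenspaces are related by $J$. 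In particular $\dim\ker(C_\phi - \lambda) = \dim\ker(C_\phi^* - \overline\lambda)$ for every $\lambda$.

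The key dichotomy comes from the Denjoy–Wolff theorem: since $\phi$ has no interior fixed point, its Denjoy–Wolff point $w$ lies on $\partial\mathbb{D}$ with $\phi'(w)\le 1$ (angular derivative), and the iterates $\phi_n$ converge to $w$ uniformly on compacta. I would split into the parabolic/hyperbolic cases according to whether $\phi'(w)=1$ or $\phi'(w)<1$. In the hyperbolic case ($0<\phi'(w)<1$), the constants are the only eigenfunctions of $C_\phi$ for eigenvalue $1$, so $\ker(C_\phi-1)$ is one-dimensional; hence $\ker(C_\phi^*-1)$ must also be one-dimensional. I would then use the structure of $C_\phi^*$ on $A^2_\beta$ — via reproducing kernels, $C_\phi^* K_z = K_{\phi(z)}$ is false in general, but for linear-fractional or for the adjoint action one has $C_\phi^* = M_g C_\sigma M_h$-type formulas (Cowen's adjoint formula) — to locate a contradiction with the spectral radius / essential spectrum, which for non-automorphisms is a disk and forces eigenvalue multiplicities incompatible with those of $C_\phi$. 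In the parabolic case, $C_\phi$ has $1$ in its approximate point spectrum but typically no eigenvalue there, whereas on the adjoint side the reproducing-kernel eigenvector $K_w$-type limit produces an actual eigenvalue, again violating $\dim\ker(C_\phi-1)=\dim\ker(C_\phi^*-1)$.

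Concretely, I expect the cleanest route uses norms of reproducing kernels: if $C_\phi$ is complex symmetric with conjugation $J$, then for the normalized kernel $k_z = K_z/\|K_z\|$ one studies $\langle C_\phi k_z, k_z\rangle$ and $\langle C_\phi^* k_z, k_z \rangle = \overline{\langle k_z, C_\phi k_z\rangle}$, and compares limiting behaviour as $z \to w$ along a radius. The no-fixed-point hypothesis forces $\phi(z)$ to approach $\partial\mathbb{D}$ as $z\to w$, and a Julia–Carathéodory estimate on $A^2_\beta$ gives precise asymptotics of $\langle C_\phi k_z, k_z\rangle$ in terms of $\phi'(w)$; complex symmetry would force a symmetry in these asymptotics (coming from $JC_\phi J=C_\phi^*$ applied to the conjugated kernels $Jk_z$) that fails. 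The main obstacle, and where the real work lies, is controlling $J$: a conjugation need not behave well with respect to the kernel functions, so one cannot simply assume $Jk_z$ is again (close to) a normalized kernel. To handle this I would invoke a general structural fact about conjugations interacting with $C_\phi^*$ — e.g. that $J$ maps the eigenvector of $C_\phi^*$ for eigenvalue $1$ to the constant function — and push the eigenvalue-multiplicity mismatch in the hyperbolic case, or an approximate-eigenvector/essential-spectrum argument in the parabolic case, to the contradiction. The hyperbolic case should be essentially self-contained via the multiplicity count; the parabolic case is the delicate one and may require an explicit description (Cowen's model) of $\phi$ and its iterates.
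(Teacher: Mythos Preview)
Your proposal has a genuine gap: you never identify the correct eigenvalue mismatch, and in fact you have it backwards. The constant function $1$ is \emph{always} an eigenvector of $C_\phi$ for the eigenvalue $1$, regardless of where the Denjoy--Wolff point sits; so your parabolic claim that ``$C_\phi$ has $1$ in its approximate point spectrum but typically no eigenvalue there'' is simply false. Complex symmetry then forces $\ker(C_\phi^*-I)\neq\{0\}$ (take $C1$, where $C$ is the conjugation). The contradiction must therefore come from showing that $\ker(C_\phi^*-I)=\{0\}$ when the Denjoy--Wolff point $\omega$ lies on $\mathbb{T}$, and this is exactly what the paper proves (Corollary~\ref{cor1}): $C_\phi-I$ has dense range, equivalently $C_\phi^*-I$ is injective. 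The proof of that fact (Theorem~\ref{thm1} and Proposition~\ref{pps2}) is the substance of the argument: one shows $(\omega-z)$ is cyclic in $A^2_\beta$, and then a dominated-convergence computation using $\phi^{[k]}\to\omega$ kills any would-be eigenvector of $C_\phi^*$ at $1$.

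Your hyperbolic sketch also stalls: you correctly observe that complex symmetry gives $\dim\ker(C_\phi-1)=\dim\ker(C_\phi^*-1)$, but then you say both are one-dimensional, which is not a contradiction, and the subsequent appeal to ``spectral radius / essential spectrum'' and ``eigenvalue multiplicities incompatible with those of $C_\phi$'' is never made concrete. (Incidentally, $C_\phi^*K_z=K_{\phi(z)}$ \emph{is} true for any analytic self-map $\phi$; you wrote that it is ``false in general.'') The kernel-asymptotics route via Julia--Carath\'eodory is an interesting idea, but you yourself identify the obstruction---you have no control over how the conjugation $J$ interacts with kernels---and nothing in the proposal overcomes it. What is missing throughout is the single clean fact the paper supplies: when $\omega\in\mathbb{T}$, the adjoint $C_\phi^*$ has \emph{no} eigenvector for $1$, and that is what clashes with $C1\in\ker(C_\phi^*-I)$.
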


  We will use Theorem \ref{pps1} to prove that the complex symmetry of $C_{\phi}$ strongly influences the dynamics of $C_{\phi}$ and $C_{\phi}^*$ on $A^2_{\beta}(\mathbb{D})$ (see Propositions \ref{pps4} and \ref{pps3}). As a consequence we will show that \textit{hyperbolic linear fractional maps} of $\mathbb{D}$ never induce complex symmetric composition operators. Hence when $\phi$ is a \textit{parabolic} or \textit{hyperbolic} automorphism of $\mathbb{D}$ we will see that $C_{\phi}$ is not complex symmetric. Our main result on the complex symmetry of composition operators induced by elliptic autmorphisms generalizes the results in \cite{wal} and \cite{Ted} on $A^2_{-1}:=H^2(\mathbb{D})$ and $A^2:=A^2_{0}(\mathbb{D})$ respectively, to all $A^2_{\beta}(\mathbb{D})$ with $\beta\in \mathbb{N}.$ We prove the following result:

\begin{thm}
	Let $\beta\in \mathbb{N}.$ If $\phi$ is an elliptic automorphism of $\mathbb{D}$ which is not a rotation then $C_{\phi}$ is not complex symmetric on $A^2_{\beta}(\mathbb{D})$ if $\phi$ has order $N\geq2(3+\beta).$
\end{thm}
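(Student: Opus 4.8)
The plan is to combine the fixed-point result (Theorem~\ref{pps1}) with a dimension/cyclicity obstruction. Since $\phi$ is an elliptic automorphism that is not a rotation, after conjugating by a disk automorphism we may assume $\phi$ fixes the origin, so $\phi(z)=\lambda z$ for some $\lambda$ with $|\lambda|=1$; the hypothesis that $\phi$ is not a rotation is preserved only up to conjugation, so the real content is that $\phi$ is an elliptic automorphism of finite order $N$ with a fixed point $p\in\mathbb D$ and a second fixed point $p^\ast$ outside $\overline{\mathbb D}$. The key structural fact I would use is that complex symmetry of $C_\phi$ forces a rigid relationship between the eigenspaces of $C_\phi$ and those of $C_\phi^\ast$: if $C_\phi=CJC$ with $J$ a conjugation, then $C_\phi^\ast$ is unitarily equivalent (via $J$) to $C_\phi$, hence they have the same eigenvalues with the same multiplicities, and moreover $C_\phi$ is cyclic if and only if $C_\phi^\ast$ is cyclic. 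This is presumably the ``strong relation between complex symmetry and cyclicity'' advertised in the abstract (Propositions~\ref{pps4} and~\ref{pps3}).

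Concretely, I would first compute the spectral picture of $C_\phi$ on $A^2_\beta(\mathbb D)$ when $\phi$ is conjugate to a rotation by a primitive $N$-th root of unity: $C_\phi$ is diagonalizable with eigenvalues the $N$-th roots of unity, and the eigenspace decomposition is induced by pulling back the monomial basis $\{z^k\}$ through the conjugating automorphism. The eigenspace for eigenvalue $\lambda^{-j}$ is spanned by $\{(\sigma)^k : k\equiv j \pmod N\}$ where $\sigma$ is the relevant Blaschke-type factor; each such eigenspace is infinite-dimensional. Then I would compute the corresponding eigenspaces of $C_\phi^\ast$. The point of the bound $N\geq 2(3+\beta)$ should be this: the adjoint $C_\phi^\ast$ on $A^2_\beta$, when $\phi$ is linear-fractional, is (by the Cowen adjoint formula) again essentially a weighted composition operator $W_{g,\psi}$ with $\psi$ another linear-fractional self-map, and its eigenfunctions can be written explicitly. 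Comparing the multiplicities of a fixed eigenvalue for $C_\phi$ and for $C_\phi^\ast$, one finds they agree for all $\beta$ — so that alone is not the obstruction; rather, the obstruction must be at the level of the conjugation intertwining eigenvectors, forcing each eigenspace of $C_\phi$ to be carried isometrically onto the corresponding eigenspace of $C_\phi^\ast$, and the geometry of these two families of subspaces inside $A^2_\beta$ is incompatible once $N$ is large relative to $\beta$.

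The cleanest route to make ``incompatible'' precise is the cyclicity argument. I expect Proposition~\ref{pps3} says: if $C_\phi$ is complex symmetric, then $C_\phi$ and $C_\phi^\ast$ have the same cyclic vectors, or at least the same cyclicity degree; equivalently the minimal number of vectors needed to generate a dense subspace under $C_\phi$ equals that for $C_\phi^\ast$. For a diagonalizable operator this minimal number equals the maximum dimension of an eigenspace in the ``reduced'' sense — but since eigenspaces are infinite-dimensional one must instead count, for each eigenvalue, the multiplicity relative to a cyclic subspace. The finite-dimensional shadow of this is: restrict attention to a finite-dimensional $C_\phi$-invariant subspace, e.g. polynomials of degree $\le m$ pulled back appropriately, on which $C_\phi$ acts with a known Jordan/diagonal form, and compare with the action of $C_\phi^\ast$ on the orthogonal complement's annihilator. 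I would set $m$ on the order of $\beta$ and show that on this subspace the eigenvalue multiplicity pattern of $C_\phi$ differs from that of $C_\phi^\ast$ precisely when $N > 2(\beta+3)$, because the adjoint's eigenfunctions that lie in $A^2_\beta$ are constrained by an integrability condition involving $\beta$, killing $2(\beta+3)$ of them in a way that has no counterpart on the $C_\phi$ side. Contradiction with complex symmetry.

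The main obstacle will be the bookkeeping in step two and three: writing down $C_\phi^\ast$ explicitly via Cowen's formula on $A^2_\beta$ (not just on $H^2$), identifying which of its formal eigenfunctions actually lie in $A^2_\beta$, and tracking the exact count that produces the threshold $2(3+\beta)$ rather than some other linear function of $\beta$. In particular, the non-rotation hypothesis is what guarantees the second fixed point $p^\ast$ of $\phi$ satisfies $|p^\ast|>1$ (rather than $p^\ast=\infty$ or $p^\ast=p$), and it is the reciprocal pair $\{p,1/\overline{p^\ast}\}$-type data feeding into the weight $g$ in $W_{g,\psi}=C_\phi^\ast$ that makes some eigenfunctions fail the $\beta$-weighted square-integrability near the boundary. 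I would isolate that integrability computation as a lemma, prove the count $2(3+\beta)$ there, and then the main theorem follows formally from Theorem~\ref{pps1}, the conjugation-invariance of cyclicity (Propositions~\ref{pps4}, \ref{pps3}), and this count.
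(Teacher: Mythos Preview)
Your plan misidentifies both the mechanism and the source of the threshold $2(3+\beta)$, and as stated it will not close. The multiplicity/cyclicity route cannot produce an obstruction here: for an elliptic automorphism of finite order $N$, both $C_\phi$ and $C_\phi^\ast$ have exactly the $N$-th roots of unity as eigenvalues, each with infinite multiplicity, and Propositions~\ref{pps4} and~\ref{pps3} only assert that $C_\phi$ and $C_\phi^\ast$ are (or are not) cyclic/hypercyclic together --- they give no finer invariant that could distinguish them. Your integrability speculation is also off: the relevant eigenfunctions on both sides are built from powers of $\phi_\alpha$ and reproducing kernels, all bounded on $\mathbb D$, so no $\beta$-weighted integrability failure occurs.

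The paper's argument is instead a \emph{geometric} (angle) obstruction between eigenspaces, and the bound $2(3+\beta)$ is purely combinatorial. Writing $v_n=C_{\phi_\alpha}^\ast z^n$, one uses the explicit adjoint formula~\eqref{658} and the fact that, for $\beta\in\mathbb N$, $(1-\overline\alpha z)^{2+\beta}$ is a \emph{polynomial} of degree $2+\beta$; this makes the Gram matrix $(\langle v_n,v_m\rangle)$ banded with bandwidth $2+\beta$, i.e.\ $v_n\perp v_m$ whenever $|n-m|\geq 3+\beta$. From this one deduces that the $C_\phi^\ast$-eigenspaces $V_0=\ker(C_\phi^\ast-I)$ and $V_{3+\beta}=\ker(C_\phi^\ast-\overline\lambda^{3+\beta}I)$ are orthogonal as soon as $N\geq 2(3+\beta)$. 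If $C_\phi$ were $C$-symmetric, the conjugation $C$ would carry this orthogonality to the corresponding $C_\phi$-eigenspaces, forcing in particular $K_0\perp \phi_\alpha^{3+\beta}$; but $\langle \phi_\alpha^{3+\beta},K_0\rangle=\alpha^{3+\beta}\neq 0$. That contradiction is the whole proof. The missing idea in your proposal is this banded-Gram-matrix computation for the $v_n$; without it you have no leverage, and no amount of cyclicity bookkeeping will manufacture the threshold $2(3+\beta)$.
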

\section{Notations and Preliminaries }
In this section, we present some preliminary definitions and results. Throughout this article we will use the following notations:   $\mathbb{D}:=\{z\in\mathbb{C}:\left| z\right| <1\}$ the open unit disc,   $\mathbb{T}:=\{z\in \mathbb{C}:\left| z\right| =1\}$ unit circle, $\mathbb{N}:=\{0,1,2,\ldots\}$ and for each operator $T$ on Hilbert space we denote the orbit of $T$ in $f$ by $\mathrm{Orb}(T,f)=\{T^nf:n=0,1,\ldots\}.$
\subsection{Complex symmetric operators} A bounded operator $T$ on a separable Hilbert space $\mathcal{H}$ is \textit{complex symmetric} if
there exists an orthonormal basis for $\mathcal{H}$ with respect to which $T$ has a self-transpose
matrix representation. An equivalent definition also exists. An conjugate-linear operator $C$ on $\mathcal{H}$ is said to be a \textit{conjugation} if $C^2=I$ and $\left\langle Cf,Cg\right\rangle =\left\langle g,f\right\rangle $ for all $f, g\in \mathcal{H}.$ So, we say that $T$ is $C$-\textit{symmetric} if $CT = T^*C,$ and complex symmetric if there
exists a conjugation $C$ with respect to which $T$ is $C$-symmetric.

In general, complex symmetric operators enjoy the following \textit{spectral symmetry } property :
 \begin{align}\label{37}
f\in Ker(T-\lambda I)\Longleftrightarrow Cf\in Ker(T^*-\overline{\lambda}I).
\end{align}
This follows from $CT=T^*C$ where $C$ is a conjugation. Another fact well-known in the literature says that $f$ is orthogonal to $Cg$  whenever $f$ and $g$ are eigenvectors corresponding to distinct eigenvalues of a $C$-symmetric operator (see \cite{pamona}).

\subsection{Weighted Bergman space $A^2_{\beta}(\mathbb{D})$} Let $dA(z)$ be the normalized area measure on $\mathbb{D}$ and $-1<\beta<\infty.$ The weighted Bergman space $A^2_{\beta}:=A^2_{\beta}(\mathbb{D})$ is the space of all analytic functions in $\mathbb{D}$ such that norm 
\begin{align*}
\left\|f\right\|:= \left( \int_{\mathbb{D}}\left| f(z)\right| ^2dA(z)\right) ^{1/2}<\infty,
\end{align*}
where $dA_{\beta}(z)=(\beta+1)(1-\left|z\right|^2)^{\beta}dA(z).$ The weighted Bergman space $A^2_{\beta}$ is a Hilbert space with the inner product 
\begin{align}\label{eq1}
\displaystyle\left\langle f,g\right\rangle=\sum_{n=0}^{\infty}\frac{n!\Gamma(2+\beta)}{\Gamma(n+2+\beta)}   \widehat{f}(n) \overline{\widehat{g}(n)},
\end{align}
where $(\widehat{f}(n))_{n\in \mathbb{N}}$ and $(\widehat{g}(n))_{n\in \mathbb{N}}$ are the sequences of Maclaurin coefficients for $f$ and $g$ respectively, and $\Gamma$ is the Gamma function.
Hence, the norm of $f\in A^2_{\beta}$ is also given by 
\begin{align}\label{eq2} 
\left\|f\right\|=\sqrt{\displaystyle \sum_{n=0}^{\infty}\frac{n!\Gamma(2+\beta)}{\Gamma(n+2+\beta)}\left|\widehat{f}(n)\right|^2}.
\end{align}
For convenience we write $A^2_0:=A^2,$ and we interpreted the classical Hardy space $H^2(\mathbb{D})$ as the \textit{limit case} of the weighted Bergman space $A^2_{\beta},$ $\beta\longrightarrow -1,$ that is $H^2(\mathbb{D}):=A^2_{-1}$ (see \cite{rikka}). 
For each $\alpha\in \mathbb{D},$ let $K_{\alpha}$ denotes the \textit{reproducing kernel} for $A^2_{\beta}$ at $\alpha;$ that is
\begin{align*}
K_{\alpha}(z)=\frac{1}{(1-\overline{\alpha}z)^{2+\beta}}.
\end{align*} 
These funcions play an important role in the theory of weighted Bergman spaces, namely: $\left\langle f, K_{\alpha}\right\rangle =f(\alpha)$ for all $f\in A^2_{\beta}.$ Moreover, if $\phi$ is an analytic self-map of $\mathbb{D},$ then a simple computation gives $C_{\phi}^*K_{\alpha}=K_{\phi(\alpha)}.$ For more details about $A^2_{\beta}$ we refer \cite{hed} and \cite{peter}.

 The space $H^{\infty}:=H^{\infty}(\mathbb{D})$ is the Banach space of all analytic and bounded functions on $\mathbb{D}.$ The norm of a function $f\in H^{\infty}$ is defined by $\left\| f\right\|_{\infty} =\sup\left\lbrace \left| f(z)\right| :z\in \mathbb{D}\right\rbrace.$ It is straightforward to verify that $H^{\infty}$ is a subspace of $A^2_{\beta}$ and $
\left\|f\right\|\leq \left\| f\right\| _{\infty}.$ Moreover, for each $\psi \in H^{\infty},$ we define the bounded operator  $M_{\psi}:A^2_{\beta}\longrightarrow A^2_{\beta}$ by $(M_{\psi}f)(z)=\psi(z)f(z).$
This operator is called (\textit{analytic}) $\mathit{Toeplitz}$  $\mathit{operator},$ it also is called multiplication operator by $\psi.$
\subsection{Denjoy-Wolff point}Let $\phi^{[n]}$ denote the $n$-th iterate of the analytic self-map $\phi,$ and we define inductively by $\phi^{[0]}=id,\phi^{[1]}=\phi$ and $\phi^{[n]}=\phi^{[n-1]}\circ \phi$ for each non-negative integer $n.$ Since $C_{\phi}^n=C_{\phi^{[n]}},$ it follows that the dynamic of $\phi$ influences strongly the dynamics of $C_{\phi}.$ If $\omega \in \overline{\mathbb{D}}$ is a fixed point for $\phi$ such that the sequence $\phi^{[n]}$ converges uniformly on compact subsets of $\mathbb{D}$ to $\omega,$ then $\omega$ is said to be an \textit{attractive fixed
point} for $\phi.$ The next result is concerned with the existence of attractive fixed points for analytic self-maps of $\mathbb{D}$ which are not elliptic automorphisms. 

\begin{thm}
	If $\phi$ is an analytic self-map of $\mathbb{D}$ is not an elliptic automorphism, then there is an unique point $\omega\in \overline{\mathbb{D}}$ such that 
	\begin{align}\label{t7}
	\omega=\lim_{n\longrightarrow \infty}\phi^{[n]}(z)
	\end{align}
	for each $z\in \mathbb{D}.$
\end{thm}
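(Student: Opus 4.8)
The plan is to argue by cases according to whether $\phi$ has a fixed point in $\mathbb{D}$, after first disposing of uniqueness: once one knows $\phi^{[n]}(z)\to\omega$ for every $z\in\mathbb{D}$, the point $\omega$ is the limit of a prescribed sequence and hence unique, and if moreover $\omega\in\mathbb{D}$ then $\omega=\lim_{n}\phi^{[n+1]}(z)=\phi(\omega)$ is a fixed point of $\phi$. So it suffices to prove existence, and I would in fact aim for convergence locally uniformly on $\mathbb{D}$. \emph{Case 1: $\phi(p)=p$ for some $p\in\mathbb{D}$.} Conjugating by the involutive disk automorphism $z\mapsto(p-z)/(1-\overline{p}z)$ (see \eqref{0}), I may assume $p=0$; the conjugated map fixes $0$ and is not a rotation, since a rotation is an elliptic automorphism and conjugation preserves that class while $\phi$ is not one. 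Denoting this map again by $\phi$, the Schwarz lemma makes $g(z):=\phi(z)/z$ (with $g(0)=\phi'(0)$) holomorphic on $\mathbb{D}$ with $g(\mathbb{D})\subseteq\overline{\mathbb{D}}$, and since $\phi$ is not a rotation the maximum principle forces $g(\mathbb{D})\subseteq\mathbb{D}$. Hence $M_{r}:=\max_{|z|\le r}|g(z)|<1$ for each $r<1$, the disk $\{|z|\le r\}$ is $\phi$-invariant, and $|\phi^{[n]}(z)|\le M_{r}^{\,n}|z|$ on it, so $\phi^{[n]}\to 0$ locally uniformly; undoing the conjugation gives $\phi^{[n]}\to p$.

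\emph{Case 2: $\phi$ has no fixed point in $\mathbb{D}$.} For $0<r<1$ the map $\phi_{r}:=r\phi$ sends $\mathbb{D}$ into the compact convex disk $r\overline{\mathbb{D}}$, so Brouwer's theorem gives a fixed point $p_{r}\in\mathbb{D}$; taking $r_{k}\uparrow 1$ and passing to a subsequence, $p_{r_{k}}\to\omega\in\overline{\mathbb{D}}$, and $\omega\in\mathbb{T}$ since $\omega\in\mathbb{D}$ would give $\phi(\omega)=\lim_{k}r_{k}\phi(p_{r_{k}})=\omega$, contradicting the hypothesis of Case 2. I would then apply the Schwarz--Pick inequality to $\phi_{r}$ at the pair $(z,p_{r})$, rewrite both sides using
\begin{align*}
1-\left|\frac{u-v}{1-\overline{v}u}\right|^{2}=\frac{(1-|u|^{2})(1-|v|^{2})}{|1-\overline{v}u|^{2}},
\end{align*}
and cancel the positive factor $1-|p_{r}|^{2}$ to get
\begin{align*}
\frac{1-|\phi_{r}(z)|^{2}}{|1-\overline{p_{r}}\,\phi_{r}(z)|^{2}}\ \ge\ \frac{1-|z|^{2}}{|1-\overline{p_{r}}\,z|^{2}};
\end{align*}
letting $r=r_{k}\to 1$ produces Wolff's lemma in the form
\begin{align*}
\frac{1-|\phi(z)|^{2}}{|1-\overline{\omega}\,\phi(z)|^{2}}\ \ge\ \frac{1-|z|^{2}}{|1-\overline{\omega}\,z|^{2}},\qquad z\in\mathbb{D},
\end{align*}
equivalently, $\phi$ maps each horodisk $E(\omega,c)=\{z\in\mathbb{D}:|1-\overline{\omega}z|^{2}<c(1-|z|^{2})\}$, a Euclidean disk internally tangent to $\mathbb{T}$ at $\omega$, into itself.

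The remaining step, which I expect to be the hardest, is to pass from this horodisk invariance to convergence. I would use that $\{\phi^{[n]}\}$ is a normal family (Montel) and that, by the horodisk inclusions, the orbit of any $z_{0}\in\mathbb{D}$ is trapped in a closed horodisk whose only point on $\mathbb{T}$ is $\omega$. The crux is that every locally uniform subsequential limit $g$ of $\{\phi^{[n]}\}$ is constant: if not, $g(\mathbb{D})\subseteq\mathbb{D}$ is open, and from $\phi^{[n_{k+1}]}=\phi^{[n_{k+1}-n_{k}]}\circ\phi^{[n_{k}]}$ one extracts, along a further subsequence, a limit $h$ of $\{\phi^{[n_{k+1}-n_{k}]}\}$ with $h\circ g=g$, whence $h=\mathrm{id}$ on the open set $g(\mathbb{D})$ and hence on $\mathbb{D}$; repeating the argument yields a holomorphic inverse of $\phi$ together with a subsequence of iterates tending to $\mathrm{id}$, impossible unless $\phi$ is the identity or an elliptic automorphism. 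Granting this, every subsequential limit is a constant, and the horodisk confinement pins that constant to $\omega$, so $\phi^{[n]}\to\omega$ locally uniformly; combined with Case 1 (where $p$ plays the role of $\omega$) and the uniqueness remark above, this proves the theorem. The delicate point is precisely the exclusion of non-constant, and of interior-constant, subsequential limits --- it is here that the hypothesis that $\phi$ is not an elliptic automorphism is consumed, via the normal-family and composition bookkeeping just indicated; a complete treatment appears in \cite{cow}.
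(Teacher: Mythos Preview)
The paper does not prove this statement: it is the classical Denjoy--Wolff theorem, and immediately after the statement the paper simply cites the original 1926 notes \cite{wolf1} and \cite{wolf2}. Your sketch follows the standard proof --- Schwarz's lemma after conjugation in the interior-fixed-point case, and in the fixed-point-free case Wolff's horocycle lemma obtained by passing to the limit in Schwarz--Pick applied to $r\phi$, followed by a normal-families argument to show every subsequential limit is the constant $\omega$ --- and this is exactly what one finds in \cite{cow}, which you rightly invoke for the delicate final step. There is nothing in the paper itself to compare your argument against.
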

This result is proved in \cite{wolf1} and \cite{wolf2}. The point $\omega$ in \eqref{t7} is called the \textit{Denjoy-Wolff} point of $\phi.$ If $\omega\in \mathbb{D}$ then $\omega$ is the unique fixed point of $\phi$ in $\mathbb{D}.$

\subsection{Linear fractional composition operators}Recall that a \textit{linear fractional self-map} of $\mathbb{D}$ is a mapping of the form 
\begin{align}\label{frac1}
\phi(z)=\frac{az+b}{cz+d}
\end{align}
satisfying $	\left| b\overline{d}-a\overline{c}\right| +\left| ad-bc\right| \leq \left| d\right| ^2-\left| c\right| ^2
$ (see \cite{ele}). Let $\mathrm{LFT}(\mathbb{D})$ be denotes the set of all linear fractional self-maps of $\mathbb{D}.$ Since these maps have at one and most two fixed points in $\overline{\mathbb{D}},$ we classify them according to the location of their fixed points, namely:
\begin{itemize}
\item \textit{Parabolic maps:} If $\phi$ has their fixed point on $\mathbb{T}.$
\item \textit{Hyperbolic maps:} If $\phi$ has attractive fixed point on $\overline{\mathbb{D}}$ and the other fixed point outside of $\mathbb{D},$ and both fixed points are on $\mathbb{T}$ if and only if $\phi$ is an automorphism of $\mathbb{D}.$

\item \textit{Loxodromic and elliptic maps:} If $\phi$ has a fixed point on $\mathbb{D}$ and the other fixed point outside of $\overline{\mathbb{D}}.$ The elliptic ones are always automorphisms of $\mathbb{D}.$ 

	\end{itemize}
It is worth mentioning that the automorphisms of $\mathbb{D}$ are linear fractional maps of the form 
\begin{align}
\phi(z)=e^{i\theta} \frac{\alpha-z}{1-\overline{\alpha}z}
\end{align}
for some $\theta\in \mathbb{R}$ and $\alpha\in \mathbb{D}.$ If $\phi$ is an elliptic automorphism of $\mathbb{D},$ then $\phi$ has an unique fixed point $\alpha\in\mathbb{D}.$ In this case $\phi$ is conjugate to a rotation via Schwarz Lemma, that is
\begin{align}\label{pre2}
\phi=\phi_{\alpha}\circ(\lambda\phi_{\alpha})
\end{align}
for some $\lambda$ on the unit circle $\mathbb{T}.$ Following, we highlight some properties of involutive automorphism $\phi_{\alpha}$ once it will play an important role in Section \ref{5}. Since $C_{\phi_{\alpha}}$ is an invertible operator with $C_{\phi_{\alpha}}^2=I$ and $(z^n)_{n\in \mathbb{N}}$ has dense span on $A^2_{\beta},$ it follows that the sequence formed by the vectors $\phi_{\alpha}^n=C_{\phi_{\alpha}}z^n$ is dense on $A^2_{\beta}$ too, moreover putting $v_n:=C_{\phi_{\alpha}}^*z^n$ for each non-negative integer we have $\left\langle v_n,v_m\right\rangle =0$ if $n\neq m.$

If $\phi$ is an analytic self-map of $\mathbb{D}$, then no general formula for $C_{\phi}^{*}$ is known. A first result in this direction is due to Carl Cowen (see \cite[Theorem 9.2]{cow}). He showed that if $\phi$ is a linear fractional self-map of $\mathbb{D}$ then such a formula is given in terms of compositions of Toeplitz operators and compositons operators.  
Based on the work of Carl Cowen, Hurst in \cite[Theorem 2]{Hurst} generalized the formula for weighted Bergman spaces $A^2_{\beta};$  \textit{if $\phi$ is as in \eqref{frac1} then }
	\begin{align}\label{frac2}
C_{\phi}=M_{g}C_{\sigma}M_{h}^{*},
\end{align}
\textit{where the functions $g,h$ and $\sigma$ are defined as}
\begin{align*}
 \sigma(z)=\frac{\overline{a}z-\overline{c}}{-\overline{b}z+\overline{d}}, \   	g(z)=\frac{1}{(-\overline{b}z+\overline{d})^{\beta+2}}\   and \ h(z)=(cz+d)^{\beta+2}.
\end{align*}

Hence, if $\phi=\phi_{\alpha}$ the formula \eqref{frac2} gives  $C_{\phi_{\alpha}}^*=M_{K_{\alpha}}C_{\phi_{\alpha}}M_{1/K_{\alpha}}^*.$  So, assuming $2+\beta$ is a natural number, we determine an expression for $M_{1/K_{\alpha}}$ in terms of the multiplication operator $M_z$, by observing 
\begin{align}\label{sym2}
\frac{1}{K_{\alpha}(z)}=(1-\overline{\alpha}z)^{2+\beta}=\sum_{k=0}^{2+\beta} {2+\beta\choose k}(-\overline{\alpha}z)^{k}.
\end{align}
Then
\begin{align}\label{pre3}
M_{1/K_{\alpha}}=\displaystyle \sum_{k=0}^{2+\beta}{2+\beta\choose k}(-\overline{\alpha}M_z)^k.
\end{align}
By combining the formula for the adjoint of $C_{\phi_{\alpha}}$ and \eqref{pre3}, we obtain the following expression:
\begin{align}\label{658}
C_{\phi_{\alpha}}^*= \sum_{k=0}^{2+\beta}{2+\beta\choose k} (-\alpha)^k M_{K_{\alpha}}C_{\phi_{\alpha}}(M_z^*)^k.
\end{align}
The equality \eqref{658}  plays an important role in section \ref{5}, it will provide a 
general way to study the orthogonality between the vectors $v_n:=C_{\phi_{\alpha}}^*z^n$ 
as compared to \cite[Lemma 2.2.]{wal} and \cite[Lemma 5]{Ted}, where the authors considered the some problem on the Hardy space $H^2(\mathbb{D})$ and Bergman space $A^2$ respectively.

\section{Cyclic vectors on $A_{\beta}^2$}\label{2}

In this section, we focus on the study of \textit{cyclic vectors} for the weighted Bergman spaces $A^2_{\beta}$, we first establish some basic results. If $f\in A^2_{\beta},$ we let $[f]$ denote the closure in $A^2_{\beta}$ of $\{pf:\text{$p$ is a polynomial in $z$}\}.$ A function $f\in A^2_{\beta}$ is said to be \textit{cyclic} if it generates the whole space, that is, $[f]=A^2_{\beta}.$
  For the Hardy space $H^2(\mathbb{D})$ or the Bergman space $A^2,$ various sufficient  and necessary conditions are known to decide if a given function is cyclic. For example see \cite[Chapter 7]{hed} and \cite[Chapter 2]{mar}.

\begin{lem}\label{lem1} Let $M_z$ be the multiplication operator on $A_{\beta}^2.$ Then 
	\begin{align}\label{lem13}\left( M_z^*f\right) (z)=\sum_{n=0}^{\infty}\displaystyle\frac{\Gamma(n+2+\beta)(n+1)}{\Gamma(n+3+\beta)} \widehat{f}(n+1)  z^n
	\end{align}
for each $f\in A^2_{\beta}.$
\end{lem}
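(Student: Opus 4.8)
The plan is to read off the Maclaurin coefficients of $M_z^*f$ by pairing it against the monomials $z^n$ and using the explicit inner product \eqref{eq1}. First I would record that $M_z=M_\psi$ with $\psi(z)=z\in H^\infty$, so that $M_z$ is a bounded operator on $A^2_\beta$ and $M_z^*$ is a well-defined bounded operator; in particular $M_z^*f\in A^2_\beta$ for every $f\in A^2_\beta$, which will later give the convergence of the series on the right-hand side of \eqref{lem13} for free.

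The heart of the argument is a one-line computation. Write $w_n:=\dfrac{n!\,\Gamma(2+\beta)}{\Gamma(n+2+\beta)}$, so that by \eqref{eq1} one has $\|z^n\|^2=w_n$, the system $(z^n)_{n\in\mathbb{N}}$ is orthogonal, and $\langle g,z^n\rangle=w_n\,\widehat g(n)$ for every $g\in A^2_\beta$. Fixing $n\in\mathbb{N}$ and using $M_z z^n=z^{n+1}$, I would compute
\begin{align*}
w_n\,\widehat{M_z^*f}(n)=\langle M_z^*f,z^n\rangle=\langle f,M_z z^n\rangle=\langle f,z^{n+1}\rangle=w_{n+1}\,\widehat f(n+1),
\end{align*}
so that $\widehat{M_z^*f}(n)=\dfrac{w_{n+1}}{w_n}\,\widehat f(n+1)$. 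Simplifying the ratio of weights,
\begin{align*}
\frac{w_{n+1}}{w_n}=\frac{(n+1)!\,\Gamma(n+2+\beta)}{n!\,\Gamma(n+3+\beta)}=\frac{(n+1)\,\Gamma(n+2+\beta)}{\Gamma(n+3+\beta)},
\end{align*}
which is precisely the coefficient appearing in \eqref{lem13}.

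To conclude, I would invoke the fact (already noted in the preliminaries) that $(z^n)_{n\in\mathbb{N}}$ has dense linear span in $A^2_\beta$; hence a function in $A^2_\beta$ is uniquely determined by the scalars $\langle\,\cdot\,,z^n\rangle$, i.e. by its Maclaurin coefficients. Therefore the coefficientwise identity just established is equivalent to \eqref{lem13}, and the series there represents the element $M_z^*f\in A^2_\beta$. I do not expect any genuine obstacle: the computation is routine, and the only point deserving a word of care is the passage from "agreement of all inner products against $z^n$" to "equality of functions in $A^2_\beta$", which is just totality of the orthogonal system $(z^n)$ in the Hilbert space $A^2_\beta$.
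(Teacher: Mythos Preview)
Your proof is correct and follows essentially the same approach as the paper: both compute the Maclaurin coefficients of $M_z^*f$ via $\langle M_z^*f,z^n\rangle=\langle f,z^{n+1}\rangle$ using the weighted inner product \eqref{eq1}, and then simplify the resulting ratio of Gamma factors. Your version is slightly more explicit about boundedness of $M_z$ and totality of $(z^n)$, but the argument is the same.
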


\begin{proof} It is enough to determine the coefficients of the function $M_{z}^*f$ That is, the value $\widehat{M_{z}^*f}(n)$ for each non-negative integer $n.$ We note that	\begin{align*}
	\frac{n!\Gamma(2+\beta)}{\Gamma(n+2+\beta)}\widehat{M^*_zf}(n) =
	\left\langle M^*_{z}f,z^n\right\rangle
	=\left\langle f,M_zz^{n}\right\rangle
	=\frac{(n+1)!\Gamma(2+\beta)}{\Gamma(n+3+\beta)}\widehat{f}(n+1).
	\end{align*}
	Hence, $\widehat{M^*_zf}(n)=\displaystyle\frac{\Gamma(n+2+\beta)(n+1)}{\Gamma(n+3+\beta)}\widehat{f}(n+1).$ 
\end{proof}

The relation \eqref{lem13} provides a simple formula to compute the $n$-th coefficient of Maclaurin of $\widehat{M^*_z}f,$ for each $f\in A^2_{\beta}.$ Below we use it to establish the injectivity of the operator $M_{\omega-z}^*.$ More precisely:

\begin{pps}\label{pps2} The adjoint of the multiplication operator $M_{\omega-z}$ on $A^2_{\beta}$ is injective for each $\omega\in \mathbb{T}.$ In particular, $(\omega-z)f$ is cyclic whenever $f$ is cyclic.
	
\end{pps}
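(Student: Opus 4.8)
The plan is to show that $M_{\omega-z}^*$ has trivial kernel and then deduce the cyclicity statement by duality. First I would take $f\in A^2_\beta$ with $M_{\omega-z}^*f=0$ and expand using Lemma~\ref{lem1} together with the obvious formula $(M_\omega^* f)(z)=\overline{\omega}\,f(z)$; since $|\omega|=1$, one has $\overline{\omega}=1/\omega$, but more importantly the kernel condition becomes a recursion relating $\widehat{f}(n+1)$ to $\widehat{f}(n)$. Concretely, writing $a_n=\widehat{f}(n)$, the equation $M_{\omega-z}^*f=0$ reads, coefficient by coefficient,
\begin{align*}
\overline{\omega}\,a_n=\frac{\Gamma(n+2+\beta)(n+1)}{\Gamma(n+3+\beta)}\,a_{n+1}=\frac{n+1}{n+2+\beta}\,a_{n+1},
\end{align*}
so that $a_{n+1}=\overline{\omega}\,\dfrac{n+2+\beta}{n+1}\,a_n$ for all $n\geq 0$.

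Next I would solve this recursion explicitly: $a_n=\overline{\omega}^{\,n}\,a_0\prod_{k=0}^{n-1}\dfrac{k+2+\beta}{k+1}=\overline{\omega}^{\,n}a_0\,\dfrac{\Gamma(n+2+\beta)}{\Gamma(2+\beta)\,n!}$. The key step is then to check that unless $a_0=0$, the resulting function fails to lie in $A^2_\beta$. Using the norm formula \eqref{eq2}, the squared norm is
\begin{align*}
\|f\|^2=\sum_{n=0}^\infty \frac{n!\,\Gamma(2+\beta)}{\Gamma(n+2+\beta)}\,|a_n|^2
=|a_0|^2\sum_{n=0}^\infty \frac{n!\,\Gamma(2+\beta)}{\Gamma(n+2+\beta)}\cdot\frac{\Gamma(n+2+\beta)^2}{\Gamma(2+\beta)^2\,(n!)^2}
=\frac{|a_0|^2}{\Gamma(2+\beta)}\sum_{n=0}^\infty \frac{\Gamma(n+2+\beta)}{n!},
\end{align*}
and since $\Gamma(n+2+\beta)/n!\sim n^{1+\beta}\to\infty$, the series diverges; hence $a_0=0$, forcing $f=0$. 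This establishes injectivity of $M_{\omega-z}^*$.

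For the ``in particular'' clause, I would argue that for any bounded operator on a Hilbert space, $\overline{\operatorname{ran}\,M_{\omega-z}}=(\ker M_{\omega-z}^*)^\perp$, so injectivity of the adjoint means $M_{\omega-z}$ has dense range. Now suppose $f$ is cyclic, i.e. $[f]=A^2_\beta$. Since $M_{\omega-z}$ commutes with every $M_z^k$ (all are multiplication operators), $M_{\omega-z}$ maps the polynomial multiples of $f$ into the polynomial multiples of $(\omega-z)f$, and by continuity $M_{\omega-z}\big([f]\big)\subseteq [(\omega-z)f]$. Therefore $[(\omega-z)f]\supseteq M_{\omega-z}([f])=M_{\omega-z}(A^2_\beta)$, which is dense; as $[(\omega-z)f]$ is closed, it equals $A^2_\beta$, so $(\omega-z)f$ is cyclic. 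I expect the only mildly delicate point to be the asymptotic estimate $\Gamma(n+2+\beta)/n!\to\infty$ ensuring divergence — this is immediate from Stirling but should be stated; everything else is routine Hilbert-space bookkeeping.
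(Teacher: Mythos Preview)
Your proof is correct and follows essentially the same route as the paper. The injectivity argument is identical: derive the recursion from Lemma~\ref{lem1}, solve it to get $\widehat{f}(n)=\overline{\omega}^{\,n}\dfrac{\Gamma(n+2+\beta)}{n!\,\Gamma(2+\beta)}\widehat{f}(0)$, and observe that the resulting norm series diverges unless $\widehat{f}(0)=0$. For the cyclicity clause the paper takes the orthogonal-complement viewpoint (pick $g\perp[(\omega-z)f]$, deduce $M_{\omega-z}^*g\perp[f]=A^2_\beta$, hence $g=0$), whereas you phrase the dual statement directly via dense range and the inclusion $M_{\omega-z}([f])\subseteq[(\omega-z)f]$; these are two formulations of the same idea and neither buys anything the other does not.
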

\begin{proof} Suppose $M_{\omega-z}^*g=0$ for some $g\in A^2_{\beta}.$ Then by Lemma \ref{lem1} we have

\begin{align*}
\displaystyle \sum_{n=0}^{\infty}\overline{\omega}\widehat{g}(n)z^n=\sum_{n=0}^{\infty}\displaystyle\frac{\Gamma(n+2+\beta)(n+1)}{\Gamma(n+3+\beta)}\widehat{g}(n+1)z^n.
\end{align*}

Hence for each non-negative integer $n,$ we have 

\begin{align}\label{lem11}\widehat{g}(n+1)=\displaystyle \frac{\Gamma(n+3+\beta)}{\Gamma(n+2+\beta)(n+1)}\widehat{g}(n).
\end{align}

By using \eqref{lem11} recursively, we obtain the relation, 
$
\widehat{g}(n)=\displaystyle \frac{\Gamma(n+2+\beta)}{n!\Gamma(\beta+2)}\widehat{g}(0).
$ Thus a simple computation provides 
\begin{align*}
\left\|g\right\|^2=  \sum_{n=0}^{\infty}\frac{n!\Gamma(2+\beta)}{\Gamma(n+2+\beta)}\left|\widehat{g}(n)\right|^2
=  \left[ \sum_{n=0}^{\infty} \frac{\Gamma(n+2+\beta)}{\Gamma(\beta+2)n!}\right] \left|\widehat{g}(0)\right|^2.
\end{align*}
Since $\displaystyle\sum_{n=0}^{\infty}\frac{\Gamma(n+2+\beta)}{\Gamma(\beta+2)n!}$ diverges, $\left\| g\right\| $ is finite if and only if $\widehat{g}(0)=0.$ Thus all Maclaurin coefficients of $g$ must vanish. Hence $g\equiv 0,$  and therefore $M_{\omega-z}^*$ is injective.

Now suppose that $f$ is cyclic and $g\in [(\omega-z)f]^{\perp}.$ Then $ 0=\left\langle z^n(\omega -z)f,g\right\rangle$ for each non-negative $n.$ Hence
\begin{align}\label{lem12}0=\left\langle z^n(\omega -z)f,g\right\rangle =\left\langle M_{\omega -z}(z^nf),g\right\rangle =\left\langle z^nf,M_{\omega -z}^*g\right\rangle. 
\end{align}
Since $f$ is cyclic, \eqref{lem12} forces $M_{\omega-z}^*g=0.$ By assuming $M_{\omega-z}^*$ injective, we get $g\equiv 0$. In particular, we conclude that $(\omega-z)f$ is cyclic.

\end{proof}

Theorem \ref{thm1} is the main result of this section. It generalizes similars results for composition operators on $H^2(\mathbb{D})$ and $A^2$ proved in \cite[Proposition 2.1]{bourdon} and \cite[Lemma 1]{Ted} respectively. The main tool they use is the cyclicity of $(\omega -z)g$ where $g$ is a cyclic eigenvector for $C_{\phi}$ and $\omega\in \mathbb{T}$ is the Denjoy-Wolff point of $\phi.$ 

\begin{thm}\label{thm1} Suppose that the analytic self-map $\phi$ of $\mathbb{D}$ has a Denjoy-Wolff point $\omega $ in $\mathbb{T}.$ If $\lambda$ is an eigenvalue of $C_{\phi}:A^2_{\beta}\longrightarrow A^2_{\beta}$ with a cyclic function as a corresponding eigenvector, then $C_{\phi}-\lambda I$ has dense range. 
\end{thm}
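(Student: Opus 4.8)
The plan is to argue by contradiction: suppose $C_\phi - \lambda I$ does not have dense range, so there is a nonzero $h \in A^2_\beta$ orthogonal to the range. Then for every $f \in A^2_\beta$ we have $\langle (C_\phi - \lambda I)f, h\rangle = 0$, i.e. $h \in \ker(C_\phi^* - \overline\lambda I)$. So the proof reduces to showing that $\overline\lambda$ is \emph{not} an eigenvalue of $C_\phi^*$, given that $\lambda$ is an eigenvalue of $C_\phi$ with a cyclic eigenvector $g$.

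First I would record the orthogonality consequence: if $C_\phi g = \lambda g$ and $C_\phi^* h = \overline\lambda h$, I want to pair these against the reproducing kernels and the Denjoy--Wolff dynamics. The key computation is to iterate. Since $C_\phi^n g = \lambda^n g$, for any $\alpha \in \mathbb D$ we get $\lambda^n g(\alpha) = (C_\phi^n g)(\alpha) = g(\phi^{[n]}(\alpha))$. As $n \to \infty$, $\phi^{[n]}(\alpha) \to \omega \in \mathbb T$ uniformly on compact sets, and $g$ extends continuously to $\omega$ only in a limited sense — but the cleaner route is the one suggested by the paragraph before the statement: use that $(\omega - z)g$ is cyclic. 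The idea is that $g$ being a cyclic eigenvector forces $g$ to be zero-free in a strong enough sense, or rather, one shows directly that $\langle g, h\rangle$-type pairings vanish and then leverages cyclicity of $(\omega-z)g$ via Proposition \ref{pps2}.

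Concretely, here is the mechanism I would use. From $C_\phi g = \lambda g$ one deduces (evaluating at the Denjoy--Wolff point, or via the standard fact that eigenvalues of $C_\phi$ with $\omega \in \mathbb T$ satisfy $|\lambda| \le 1$ and more precisely lie in a spiral/annulus governed by $\phi'(\omega)$) that $\lambda$ and $\overline\lambda$ cannot simultaneously be an eigenvalue of $C_\phi$ and $C_\phi^*$ unless one produces a contradiction with the cyclicity of $g$. The route I expect to work: take $h$ with $C_\phi^* h = \overline\lambda h$; then $\langle g, h\rangle = \langle C_\phi g, h\rangle/\lambda$ — not immediately useful — so instead I would consider the vector-valued identity $\langle C_\phi(pg), h\rangle = \langle pg, C_\phi^* h\rangle = \overline\lambda\langle pg, h\rangle$ for all polynomials $p$. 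But $C_\phi(pg) = (p\circ\phi)(g\circ\phi) = (p\circ\phi)\,\lambda g$. So $\lambda\langle (p\circ\phi)g, h\rangle = \overline\lambda \langle pg, h\rangle$. Choosing $p \equiv 1$ gives $\lambda\langle g,h\rangle = \overline\lambda\langle g,h\rangle$, so either $\lambda$ is real or $\langle g, h\rangle = 0$. Then one pushes this: define the functional $\Phi(p) = \langle pg, h\rangle$ on polynomials; the relation says $\Phi$ intertwines composition by $\phi$ with multiplication by $\lambda/\overline\lambda$. Using that $\phi^{[n]} \to \omega$, the functions $p \circ \phi^{[n]}$ converge locally uniformly to the constant $p(\omega)$, and combined with cyclicity of $(\omega - z)g$ — which says $\{(\omega-z)qg : q \text{ polynomial}\}$ is dense, hence $\Phi$ must vanish on a dense set once we know $\Phi((\omega-z)qg) = 0$ — I would force $h = 0$.

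The main obstacle is making the limiting argument rigorous: the convergence $p\circ\phi^{[n]} \to p(\omega)$ is only locally uniform on $\mathbb D$, whereas the inner product $\langle \cdot, h\rangle$ is not continuous with respect to local uniform convergence, so one cannot naively pass to the limit inside $\Phi$. The fix is precisely why the cyclicity of $(\omega-z)g$ (Proposition \ref{pps2}) is the right tool rather than the cyclicity of $g$ alone: one shows that $h$ is orthogonal to $(\omega - \phi^{[n]})(q\circ\phi^{[n]})\,g$ for suitable $q$, and the factor $(\omega - \phi^{[n]})$ — which tends to $0$ — lets one control tail estimates and extract that $h \perp (\omega - z)q g$ for all polynomials $q$, so $h \perp [(\omega-z)g] = A^2_\beta$, whence $h = 0$. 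The remaining bookkeeping — verifying the norm bounds that justify the termwise limit, and handling the degenerate case where $\lambda \in \mathbb T$ (where $C_\phi$ would have to be, up to the eigenvector, rather rigid) — is routine once this skeleton is in place.
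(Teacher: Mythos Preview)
Your skeleton is exactly the paper's: reduce to showing $\ker(C_\phi^*-\overline\lambda I)=\{0\}$, take $h$ in that kernel, exploit the intertwining $\langle (p\circ\phi^{[k]})g,h\rangle=\langle pg,h\rangle$, specialize to $p(z)=z^n(\omega-z)$, pass to the limit $k\to\infty$, and conclude $h\perp[(\omega-z)g]=A^2_\beta$ via Proposition~\ref{pps2}. Two points deserve correction, though.

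First, a computational slip: from $C_\phi^*h=\overline\lambda h$ you get $\langle pg,C_\phi^*h\rangle=\langle pg,\overline\lambda h\rangle=\lambda\langle pg,h\rangle$, not $\overline\lambda\langle pg,h\rangle$. So the relation is simply $\langle(p\circ\phi)g,h\rangle=\langle pg,h\rangle$ (after cancelling $\lambda\neq0$), and your case split ``$\lambda$ real or $\langle g,h\rangle=0$'' is spurious.

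Second, and more substantively, the limiting step is not a matter of ``tail estimates'' coming from the smallness of $(\omega-\phi^{[k]})$: that factor tends to zero only pointwise, and $\|(\omega-\phi^{[k]})(q\circ\phi^{[k]})g\|$ need not tend to zero in $A^2_\beta$. What works --- and what the paper does --- is dominated convergence on the integral defining the inner product: since $g,h\in A^2_\beta$ one has $|g\overline h|\in L^1(\mathbb D,dA_\beta)$ by Cauchy--Schwarz, and $|\phi^{[k]n}(\omega-\phi^{[k]})g\overline h|\le 2|g\overline h|$ uniformly in $k$, while the integrand tends to zero pointwise because $\phi^{[k]}\to\omega$. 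That is the ``norm bound'' you need, and it is worth naming rather than leaving as routine.
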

\begin{proof} Since $\phi$ has Denjoy- Wolff point in $\mathbb{T},$ it follows that $\phi$ is nonconstant, and therefore $\phi$ is an open function. Let $g$ be a cyclic eigenvector of $C_{\phi}$ corresponding to the eigenvalue $\lambda.$
It is worth noting that $\lambda\neq 0.$ Indeed, if $\lambda=0$ then $g(\phi(z))=0$ for all $z\in \mathbb{D}$ hence $g\equiv 0.$

 We recall that the operator $C_{\phi}-\lambda I$ has dense range if and only if $C_{\phi}^{*}-\overline{\lambda}I$  is injective. 
Therefore, if $\overline{\lambda}$ is an eigenvalue of $C_{\phi}^{*}$ there is a nonzero vector $h\in A^2_{\beta}$ such that $C_{\phi}^{*}h=\overline{\lambda}h$. Then for any non-negative integers $n$ and $k,$ we have
\begin{align}\label{the31}
\lambda^k\left\langle z^n(\omega -z)g,h\right\rangle=&\left\langle z^n(\omega -z)g, \overline{\lambda}^kh\right\rangle\nonumber
=\left\langle z^n(\omega -z)g,(C_{\phi}^{*})^{k}h\right\rangle\nonumber\\
= &\left\langle C_{\phi^k}(z^n(\omega -z)g),h\right\rangle\nonumber
=\left\langle \phi_{k}^n(\omega -\varphi_k)g\circ \phi_{k}^n,h\right\rangle\nonumber\\
=& \left\langle \phi_{k}^n(\omega -\phi_k)(C_{\phi})^kg,h\right\rangle\nonumber
=\left\langle \phi^k(\omega -\phi_k)\lambda^kg,h\right\rangle\nonumber\\
=&\lambda^k\left\langle \phi_k^{n}(\omega -\phi_k)g,h\right\rangle.
\end{align}
By combining \eqref{the31} and $\lambda\neq 0$ we obtain 
\begin{equation}\label{eq12}
\left\langle z^n(\omega -z)g,h\right\rangle=\left\langle \phi^n_{k}(\omega -\phi_k)g,h\right\rangle.
\end{equation}
Since $\left|\phi_{k}^n(\omega -\phi_k)g\overline{h}\right|\leq 2\left|g\overline{h}\right|\in L^1(\mathbb{D},dA_{\beta})$ and the iterated sequence $(\phi_k)_{k=1}^{\infty}$ converges pointwise to $\omega $ on $\mathbb{D}$ (even uniformly on compact subsets of $\mathbb{D},$ see \cite[Theorem 2.51]{cow}). By applying the Lebesgue Dominated Convergence Theorem, we obtain from \eqref{eq12} that
\begin{equation*}
\left\langle z^n(\omega -z)g,h\right\rangle=\displaystyle\lim _{k\longrightarrow \infty}\left\langle \phi_k^n(\omega -\phi_k)g,h\right\rangle=0,
\end{equation*}
for each non-negative integer $n.$ Hence, $h\in [(\omega-z)g]^{\perp}.$ By the Proposition \ref{pps2}, $(\omega-z)g$ is cyclic, and therefore $h$ is identically zero. However this contradicts the fact that $h$ is an eigenvector.
\end{proof}
\begin{cor}\label{cor1}Suppose that $\phi$ is an analytic self-map of $\mathbb{D}.$ If $\phi$ has Denjoy-Wolff point in $\mathbb{T},$ then $C_{\phi}-I$ has dense range on $A^2_{\beta}.$
\end{cor}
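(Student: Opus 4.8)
The plan is to deduce this directly from Theorem \ref{thm1} by exhibiting a single cyclic eigenvector of $C_{\phi}$ associated with the eigenvalue $\lambda=1$. The natural candidate is the constant function $\mathbf{1}\equiv 1$. Since $\phi$ is a self-map of $\mathbb{D}$, we have $C_{\phi}\mathbf{1}=\mathbf{1}\circ\phi=\mathbf{1}$, so $\mathbf{1}$ is indeed an eigenvector of $C_{\phi}$ with eigenvalue $1$. This holds for any analytic self-map, so the hypothesis that the Denjoy-Wolff point lies on $\mathbb{T}$ is not needed for this step; it is only invoked to apply Theorem \ref{thm1}.

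The second step is to check that $\mathbf{1}$ is cyclic in $A^2_{\beta}$. By definition, $[\mathbf{1}]$ is the closure of $\{p\cdot\mathbf{1}:p\text{ is a polynomial}\}$, which is precisely the closure of the set of all polynomials in $z$. Since the monomials $(z^n)_{n\in\mathbb{N}}$ have dense span in $A^2_{\beta}$ (as recorded in the preliminaries), it follows that $[\mathbf{1}]=A^2_{\beta}$, so $\mathbf{1}$ is cyclic.

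With these two observations in hand, the conclusion is immediate: $\lambda=1$ is an eigenvalue of $C_{\phi}$ whose eigenspace contains the cyclic function $\mathbf{1}$, and $\phi$ has Denjoy-Wolff point $\omega\in\mathbb{T}$ by hypothesis. Applying Theorem \ref{thm1} with this choice of $\lambda$ yields that $C_{\phi}-I$ has dense range, as desired. There is no genuine obstacle here; the entire content lies in recognizing that the constant function simultaneously serves as an eigenvector for the eigenvalue $1$ and as a cyclic vector, after which Theorem \ref{thm1} does all the work.
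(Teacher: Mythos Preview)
Your proof is correct and matches the paper's approach exactly: apply Theorem~\ref{thm1} with $\lambda=1$ and the cyclic eigenvector $g\equiv 1$. The paper's proof is a single sentence to this effect.
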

\begin{proof} This is an immediate consequence of Theorem \ref{thm1}, since
$\lambda=1$ is an eigenvalue for $C_{\phi}$ having the cyclic eigenvector $g\equiv 1.$
\end{proof}
\section{Cylicity and Hypercyclicity}\label{3}
The next main result shows that if $C_{\phi}$ is complex symmetric on $A^2_{\beta},$ then $\phi$ must fix a point in $\mathbb{D}.$ Naturally, this implies results about the dynamics of $C_{\phi}$ and $C_{\phi}^*.$ 

\begin{thm}\label{thm2} Let $\phi$ be a analytic self-map of $\mathbb{D}.$ If $C_{\phi}:A^2_{\beta}\longrightarrow A^{2}_{\beta}$ is complex symmetric then $\phi$ either an elliptic automorphism of the unit disc or has a Denjoy-Wolff point in $\mathbb{D}.$ 
\end{thm}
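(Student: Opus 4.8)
The plan is to combine the Denjoy--Wolff dichotomy with the spectral symmetry \eqref{37} of complex symmetric operators, using Theorem~\ref{thm1} (equivalently Corollary~\ref{cor1}) as the crucial input. By the Denjoy--Wolff theorem recalled above (see \eqref{t7}), if $\phi$ is not an elliptic automorphism of $\mathbb{D}$ then it possesses a Denjoy--Wolff point $\omega\in\overline{\mathbb{D}}$, and when $\omega\in\mathbb{D}$ this is precisely a fixed point of $\phi$ inside the disc. So the only case that needs to be excluded is: $\phi$ is not an elliptic automorphism \emph{and} its Denjoy--Wolff point $\omega$ lies on $\mathbb{T}$. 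I would assume this and seek a contradiction.

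First I would observe that $\lambda=1$ is always an eigenvalue of $C_{\phi}$, with the constant function $g\equiv 1$ as eigenvector, and that $g$ is cyclic because the polynomials span a dense subspace of $A^2_{\beta}$. Hence Theorem~\ref{thm1} applies with this $\lambda$ and $g$, and it gives that $C_{\phi}-I$ has dense range on $A^2_{\beta}$; equivalently, $C_{\phi}^{*}-I$ is injective, so $1$ is \emph{not} an eigenvalue of $C_{\phi}^{*}$.

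On the other hand, complex symmetry of $C_{\phi}$ provides a conjugation $C$ with $CC_{\phi}=C_{\phi}^{*}C$, whence \eqref{37} with $\lambda=1$ yields $Cg\in \mathrm{Ker}(C_{\phi}^{*}-I)$; since a conjugation is bijective, $Cg\neq 0$, so $1$ \emph{is} an eigenvalue of $C_{\phi}^{*}$. This contradicts the conclusion of the previous paragraph, so the excluded case cannot occur, and therefore $\phi$ is either an elliptic automorphism of $\mathbb{D}$ or has a Denjoy--Wolff point in $\mathbb{D}$.

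I do not anticipate a real obstacle here: the argument reduces entirely to Theorem~\ref{thm1}, which has already been proved, together with the elementary facts that a conjugation is bijective and that for a Hilbert space operator $T$ the range of $T$ is dense if and only if $T^{*}$ is injective. The only points deserving a line of justification are the cyclicity of the constant eigenvector (so that Theorem~\ref{thm1} is applicable) and the observation that ``$\phi$ fixes a point in $\mathbb{D}$'' is the same as ``$\phi$ is an elliptic automorphism or has a Denjoy--Wolff point in $\mathbb{D}$'', both of which are routine. If desired, the same conjugation identity upgrades the conclusion to $\dim\mathrm{Ker}(C_{\phi}-\lambda I)=\dim\mathrm{Ker}(C_{\phi}^{*}-\overline{\lambda} I)$ for every $\lambda$, but that refinement is not needed for the present statement.
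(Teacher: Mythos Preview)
Your argument is correct and is essentially the paper's own proof: assume the Denjoy--Wolff point lies on $\mathbb{T}$, use Corollary~\ref{cor1} (which is exactly your application of Theorem~\ref{thm1} to the cyclic eigenvector $g\equiv 1$) to get that $C_{\phi}^{*}-I$ is injective, and then contradict this via the spectral symmetry \eqref{37} applied to the constant function (the paper writes it as $K_0$). The only cosmetic difference is that you spell out the Denjoy--Wolff dichotomy explicitly at the outset, whereas the paper jumps straight to the contrapositive.
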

\begin{proof} Suppose on the contrary that $\phi$ has Denjoy-Wolff point in $\mathbb{T}.$ By Corollary \ref{cor1} follows that $C_{\phi}-I$ has dense range on $A^2_{\beta}$ or equivalently $C_{\phi}^*-I$ is injective. If $C_{\phi}$ also is complex symmetric, we obtain $CC_{\phi}^*C=C_{\phi}$ for some conjugation $C,$ so $C_{\phi}^*-I$ is not injective because $C_{\phi}^*CK_0=CK_{0}.$ Hence $C_{\phi}$ is not complex symmetric.
	\end{proof}

An operator $T$ on  $\mathcal{H}$ is said to be \textit{cyclic} if there exists a vector $f\in \mathcal{H}$ for which the linear span of its orbit $(T^nf)_{n\in \mathbb{N}}$ is dense in $\mathcal{H}.$ If the orbit $(T^nf)_{n\in \mathbb{N}}$ itself is dense in $\mathcal{H},$ then $T$ is said to be \textit{hypercyclic}. In these cases $f$ is called a \textit{cyclic} or \textit{hypercyclic} vector for $T$ respectively. The book \cite{caos} has a systematic study of cyclic and hypercyclic operators. In particular it shows how spectral properties influence dynamical properties. For example if $T$ has eigenvalues then $T^*$ is never hypercyclic (see \cite[Proposition 5.1.]{caos}). Hence $C_{\phi}^*$ is never hypercyclic since $1$ is always an eigenvalue for $C_{\phi}.$ 
Additionally, if $C_{\phi}$ is complex symmetric then we have:
\begin{pps}\label{pps4}
	Let $\phi$ be an analytic self-map of $\mathbb{D}$ such that $C_{\phi}$ is complex symmetric on $A^2_{\beta}.$ Then $C_{\phi}$ and $C_{\phi}^*$ are not hypercyclic.  
\end{pps}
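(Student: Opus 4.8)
The plan is to show that neither $C_\phi$ nor $C_\phi^*$ can be hypercyclic, splitting into the two cases permitted by Theorem \ref{thm2}: either $\phi$ is an elliptic automorphism of $\mathbb{D}$, or $\phi$ has a Denjoy--Wolff point in $\mathbb{D}$. First recall the general principle already noted in the excerpt: since $1$ is always an eigenvalue of $C_\phi$ (with eigenvector the constant function), $C_\phi^*$ is never hypercyclic by \cite[Proposition 5.1]{caos}. Moreover, complex symmetry gives $C_\phi = C C_\phi^* C$ for a conjugation $C$, and a conjugation is a (conjugate-linear) isometric involution; hence $C_\phi$ is hypercyclic if and only if $C_\phi^*$ is hypercyclic, because $C$ maps dense orbits to dense orbits. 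Therefore it suffices to prove that $C_\phi^*$ is not hypercyclic, which is already done, \emph{or} directly that $C_\phi$ is not hypercyclic; I would present it via the equivalence, so the whole statement reduces to the single observation about eigenvalues of $C_\phi$ together with the conjugation argument.

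To make this airtight I would spell out the conjugation transfer step. Suppose $f$ is a hypercyclic vector for $C_\phi$, so $\{C_\phi^n f : n \geq 0\}$ is dense in $A^2_\beta$. Since $C_\phi^n = C (C_\phi^*)^n C$, we get $C_\phi^n f = C\big((C_\phi^*)^n (Cf)\big)$, so $\{(C_\phi^*)^n (Cf) : n \geq 0\} = C\big(\{C_\phi^n f : n \geq 0\}\big)$. Because $C$ is a surjective isometry (in the real-linear sense, with $\|Cg\| = \|g\|$ coming from $\langle Cg, Cg\rangle = \langle g, g\rangle$), it maps dense sets to dense sets, so $Cf$ is a hypercyclic vector for $C_\phi^*$. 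This contradicts the fact that $C_\phi^*$ is never hypercyclic. Symmetrically, a hypercyclic vector for $C_\phi^*$ would produce one for $C_\phi$, but we can also just cite \cite[Proposition 5.1]{caos} directly for $C_\phi^*$ since $\operatorname{Ker}(C_\phi - I) \neq \{0\}$ forces (via spectral symmetry \eqref{37}, or directly) an eigenvector for $C_\phi^*$ — indeed $C_\phi^* C K_0 = C K_0$ as already observed in the proof of Theorem \ref{thm2}, so $1 \in \sigma_p(C_\phi^*)$, and an operator with point spectrum cannot be hypercyclic.

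The main (and essentially only) obstacle is bookkeeping the conjugate-linearity of $C$ correctly: one must check that $C$ genuinely carries a dense subset to a dense subset despite being only real-linear, which follows from $\|Cg - Ch\| = \|C(g-h)\| = \|g - h\|$, and that the orbit identity $C_\phi^n = C(C_\phi^*)^n C$ is valid, which is immediate by induction from $C_\phi = C C_\phi^* C$ and $C^2 = I$. No appeal to the case division of Theorem \ref{thm2} is actually needed for hypercyclicity — the eigenvalue obstruction alone suffices — so the proof is short; I would keep it to these two ingredients and cite \cite{caos} for the statement that operators with eigenvalues have non-hypercyclic adjoints.
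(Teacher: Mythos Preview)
Your proof is correct but takes a different route from the paper's. The paper invokes Theorem~\ref{thm2} to produce a fixed point $\alpha\in\mathbb{D}$ for $\phi$, then observes $C_\phi^* K_\alpha = K_{\phi(\alpha)} = K_\alpha$, so $C_\phi^*$ has an eigenvalue and hence $C_\phi$ is not hypercyclic by \cite[Proposition~5.1]{caos}. You instead bypass Theorem~\ref{thm2} entirely: either you transfer a hypothetical hypercyclic orbit for $C_\phi$ to one for $C_\phi^*$ via the conjugation $C$ (using that $C$ is a surjective real-linear isometry), or you use the spectral symmetry \eqref{37} to obtain $C_\phi^* CK_0 = CK_0$ directly from $C_\phi K_0 = K_0$. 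Both of your arguments are valid and strictly more elementary, since they do not depend on the Denjoy--Wolff machinery underlying Theorem~\ref{thm2}; the paper's version, on the other hand, has the minor advantage of exhibiting the $C_\phi^*$-eigenvector concretely as a reproducing kernel. Your observation that the case split from Theorem~\ref{thm2} is unnecessary here is correct and worth keeping.
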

\begin{proof} By the comments above it is enough to show that $C_{\phi}$ is not hypercyclic.  If $C_{\phi}$ is complex symmetric then $\phi(\alpha)=\alpha$ for some $\alpha \in \mathbb{D}.$ So $C_{\phi}^*K_{\alpha}=K_{\alpha},$ and therefore $C_{\phi}$ is not hypercyclic.
\end{proof}

Proposition \ref{pps4} says that the Bergman space $A^2_{\beta}$ does not support composition operators simultaneously complex symmetric and hypercyclic. As we saw this is strongly influenced by the existence of fixed points for $\phi$ in $\mathbb{D}.$

In contrast to Proposition \ref{pps4}, we will see that each complex symmetric  $C_{\phi}$ with non-automorphic $\phi$ is cyclic. This is consequence of the following result: \textit{If $\phi$ is an analytic self-map of the disk  with $\phi(\alpha)=\alpha$ for some
	a in the unit disk and $\phi$ is neither constant nor an elliptic automorphism, then $C_{\phi}^*$ is cyclic on $H^2(\mathbb{D})$ with cyclic vector $K_z$ for each $z\neq \alpha$}. The proof of this result appears in the work done by T. Worner (see \cite{Worner}), where the author studied the commutant of certains composition operators in $H^2(\mathbb{D}).$ Here it is worth mentioning that the proof given in \cite[Theorem 3]{Worner} works for $A^2_{\beta}.$

\begin{pps}\label{pps3}
	Let $\phi$ be an analytic self-map of $\mathbb{D}$ such that $C_{\phi}$ is complex symmetric on $A^2_{\beta}$ and $\phi$ is neither constant nor an elliptic automorphism then $C_{\phi}$ and $C_{\phi}^*$ are cyclic. 
\end{pps}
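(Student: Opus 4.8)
The plan is to reduce the cyclicity of $C_\phi$ and $C_\phi^*$ to the result of Worner (as adapted to $A^2_\beta$) via the complex symmetry hypothesis. First I would invoke Theorem \ref{thm2}: since $C_\phi$ is complex symmetric and $\phi$ is not an elliptic automorphism (and not constant), $\phi$ must have its Denjoy--Wolff point $\alpha$ in $\mathbb{D}$, i.e. $\phi(\alpha)=\alpha$ for some $\alpha\in\mathbb{D}$. Thus $\phi$ satisfies exactly the hypotheses of Worner's theorem: it is neither constant nor an elliptic automorphism and fixes a point in $\mathbb{D}$. By the version of \cite[Theorem 3]{Worner} valid on $A^2_\beta$, $C_\phi^*$ is cyclic, with cyclic vector $K_z$ for any $z\neq\alpha$.

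Next I would transfer cyclicity of $C_\phi^*$ to $C_\phi$ using the conjugation. Let $C$ be the conjugation with $CC_\phi^*C=C_\phi$, equivalently $CC_\phi^*=C_\phi C$. If $f$ is a cyclic vector for $C_\phi^*$, I claim $Cf$ is a cyclic vector for $C_\phi$. Indeed, for each $n$ we have $C_\phi^n(Cf)=C C_\phi^{*\,n} f$ (by induction from $CC_\phi^* = C_\phi C$), so $\mathrm{Orb}(C_\phi,Cf)=C\,\mathrm{Orb}(C_\phi^*,f)$. Since $C$ is a conjugate-linear isometry with $C^2=I$, it maps dense linear spans to dense linear spans: the closed linear span of $C\mathcal{S}$ equals $C$ applied to the closed linear span of $\mathcal{S}$ for any set $\mathcal{S}$, because $C$ is a continuous bijection sending subspaces to subspaces (conjugate-linearity preserves the subspace property). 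Hence the closed linear span of $\mathrm{Orb}(C_\phi,Cf)$ is $C$ applied to all of $A^2_\beta$, which is $A^2_\beta$. Therefore $C_\phi$ is cyclic.

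The only subtle point is making sure the adaptation of Worner's argument to $A^2_\beta$ is legitimate; the excerpt already asserts that \cite[Theorem 3]{Worner} works verbatim for $A^2_\beta$, so I would simply cite that remark rather than reproduce the argument. I would also note in passing that the case $\phi$ constant is excluded by hypothesis, and the case $\phi$ an elliptic automorphism is explicitly excluded, so Theorem \ref{thm2} indeed yields a fixed point in $\mathbb{D}$ — there is no gap coming from the dichotomy in that theorem. The main obstacle, such as it is, is purely bookkeeping: verifying that the conjugate-linear isometry $C$ carries cyclic vectors of $C_\phi^*$ to cyclic vectors of $C_\phi$, which is the short computation $C_\phi^n C = C C_\phi^{*\,n}$ together with the observation that $C$ preserves closed linear spans. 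Everything else is an immediate appeal to previously established results.

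\begin{proof}
Since $C_{\phi}$ is complex symmetric and $\phi$ is neither constant nor an elliptic automorphism, Theorem \ref{thm2} guarantees that $\phi$ has Denjoy--Wolff point $\alpha$ in $\mathbb{D}$; in particular $\phi(\alpha)=\alpha$. Thus $\phi$ satisfies the hypotheses of \cite[Theorem 3]{Worner}, whose proof (as remarked above) carries over to $A^2_{\beta}$: the operator $C_{\phi}^*$ is cyclic on $A^2_{\beta}$, with cyclic vector $K_z$ for each $z\neq\alpha$.

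It remains to deduce that $C_{\phi}$ is cyclic. Let $C$ be a conjugation with $CC_{\phi}^*C=C_{\phi}$, equivalently $CC_{\phi}^*=C_{\phi}C$. An easy induction gives $C_{\phi}^nC=CC_{\phi}^{*\,n}$ for every $n\in\mathbb{N}$. Let $f$ be a cyclic vector for $C_{\phi}^*$ and put $h=Cf$. Then $C_{\phi}^nh=C_{\phi}^nCf=CC_{\phi}^{*\,n}f$, so $\mathrm{Orb}(C_{\phi},h)=C\bigl(\mathrm{Orb}(C_{\phi}^*,f)\bigr)$. Since $C$ is a conjugate-linear isometry with $C^2=I$, it is a homeomorphism of $A^2_{\beta}$ onto itself sending (closed) linear subspaces to (closed) linear subspaces; hence the closed linear span of $\mathrm{Orb}(C_{\phi},h)$ equals $C$ applied to the closed linear span of $\mathrm{Orb}(C_{\phi}^*,f)$, which is $C(A^2_{\beta})=A^2_{\beta}$. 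Therefore $h$ is a cyclic vector for $C_{\phi}$, and both $C_{\phi}$ and $C_{\phi}^*$ are cyclic.
\end{proof}
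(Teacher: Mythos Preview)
Your proof is correct and follows essentially the same approach as the paper: invoke Theorem \ref{thm2} to obtain an interior fixed point, apply Worner's result to get cyclicity of $C_\phi^*$, and then transfer cyclicity to $C_\phi$ via the conjugation. The only cosmetic difference is that the paper verifies $CK_z$ is cyclic for $C_\phi$ by an orthogonal-complement computation (showing $f\perp \mathrm{Orb}(C_\phi,CK_z)$ forces $Cf\perp \mathrm{Orb}(C_\phi^*,K_z)$, hence $Cf=0$), whereas you argue directly that $C$ carries closed linear spans to closed linear spans; the two arguments are equivalent.
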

\begin{proof} Let $\alpha$ be the Denjoy-Wolff of $\phi.$ According to Theorem \ref{thm2}, the complex symmetry of $C_{\phi}$ implies $\alpha\in \mathbb{D}.$ In particular, $\phi(\alpha)=\alpha.$ So $C_{\phi}^*$ is cyclic and $K_z$ is a cyclic vector for $C_{\phi}^*$ for each $z\neq\alpha.$ Since $C_{\phi}$ is complex symmetric, we have $C_{\phi}=CC_{\phi}^*C$ for some conjugation $C.$ Let $f$ in the orthogonal complement of the span of $\mathrm{Orb}(C_{\phi},CK_{z})$ then
	\begin{equation*}0=\left\langle f,C_{\phi}^{n}CK_{z}\right\rangle=\left\langle f,C(C _{\phi}^n)^*K_z\right\rangle=\left\langle (C_{\phi}^{n})^*K_z,Cf\right\rangle
	\end{equation*}
	for each non-negative integer $n.$ So $Cf=0$ since $K_z$ is a cyclic vector for $C_{\phi}^*.$ Because $C$ is an isometry, we obtain $f\equiv 0.$  Hence, span of $\mathrm{Orb}(C_{\phi},CK_{z})$ is dense in $A^2_{\beta}$ and therefore $C_{\phi}$ is cyclic.
\end{proof}	
It is worth mentioning that Propositions \ref{pps4} and \ref{pps3} are generalizations of \cite[Theorem 5.1.]{jung}. Moreover, we need not suppose that $\phi$ has a fixed point in $\mathbb{D},$ because this is by guaranteed Theorem \ref{thm2}.

\section{Hyperbolic linear fractional non-automorphisms}\label{4}
As we saw in Section \ref{3} the complex symmetry of $C_{\phi}$ strongly influences the location of the Denjoy-Wolff point of $\phi.$ More precisely, if $C_{\phi}$ is complex symmetric then $\phi$ has a fixed point inside the disk (see Proposition \ref{thm2}). Hence parabolic linear fractional maps never induce complex symmetric composition operators on $A^2_{\beta}.$ So, for a complete classification of the linear fractional self-maps that induce complex symmetric composition operators on $A^2_{\beta}$ we must study the hyperbolic, loxodromic and elliptic maps.

 In this section we deal with the case in which $\phi$ is a hyberbolic linear fractional map. We will see that in this case $C_{\phi}$ is not complex symmetric. We begin our study showing that each hyperbolic linear fractional map is similar to 
a map of the form  
\begin{align*}
\psi_s(z)=\frac{sz}{1-(1-s)z} \ \text{for some} \ 0<\left|s\right|<1.
\end{align*}

\begin{lem}\label{11}
	Let $\phi$ a hyperbolic linear fractional map. Then it is similar to $\psi_s$ for some $0<\left|s\right|<1.$
\end{lem}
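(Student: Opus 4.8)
The plan is to normalize $\phi$ through its fixed points by a linear fractional conjugation, so that ``similar'' here means $\phi=T^{-1}\circ\psi_s\circ T$ for some linear fractional $T$. Since $\phi$ is hyperbolic it is neither parabolic nor an elliptic automorphism, so, viewed as a M\"obius transformation of the Riemann sphere, $\phi$ has two \emph{distinct} fixed points; denote the attractive one by $\omega_0$ and the other by $\omega_1$. By the Denjoy--Wolff Theorem $\omega_0\in\overline{\mathbb D}$ (it is the Denjoy--Wolff point of $\phi$) while $\omega_1\in\overline{\mathbb C}\setminus\mathbb D$, and in particular $\omega_0\neq\infty$.

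First I would pick a linear fractional $T$ with $T(\omega_0)=0$ and $T(\omega_1)=1$; such a $T$ exists because $\omega_0\neq\omega_1$. Then $\widetilde\phi:=T\circ\phi\circ T^{-1}$ is a linear fractional map fixing $0$ and $1$. An elementary computation shows that every linear fractional map fixing $0$ and $1$ has the form $z\mapsto\frac{(c+1)z}{cz+1}$ for some $c$, and with $s:=c+1$ this is exactly $\psi_s(z)=\frac{sz}{1-(1-s)z}$; moreover such a map is nonsingular precisely when $s\neq0$, and its derivative at $0$ equals $s$. Hence $\widetilde\phi=\psi_s$ with $s=\widetilde\phi'(0)$. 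Since the multiplier at a fixed point is invariant under M\"obius conjugation (the chain rule, valid because $T$ is biholomorphic near $\omega_0$), we get $s=\phi'(\omega_0)$, and therefore $\phi=T^{-1}\circ\psi_s\circ T$.

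It then remains to check $0<|s|<1$. That $s\neq0$ is immediate, since a nonsingular linear fractional map has nowhere-vanishing derivative. For $|s|<1$ we use that $\omega_0$ is the attractive fixed point of a self-map of $\mathbb D$: if $\omega_0\in\mathbb D$, Schwarz--Pick gives $|\phi'(\omega_0)|\le 1$ with equality only for automorphisms, and an automorphism with an interior fixed point is elliptic, which is excluded; if $\omega_0\in\mathbb T$, the Julia--Carath\'eodory theorem gives $0<\phi'(\omega_0)\le 1$, with $\phi'(\omega_0)=1$ forcing $\phi$ to be parabolic, again excluded. Thus $0<|s|<1$ (in fact $s$ turns out to be real), and for such $s$ one checks directly that $\psi_s$ is a genuine linear fractional self-map of $\mathbb D$. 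I expect this derivative estimate to be the only delicate point; everything else is routine algebra with linear fractional maps.

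Finally, a word on why this is enough for the sequel. If the Denjoy--Wolff point $\omega_0$ lies on $\mathbb T$, then by Theorem \ref{thm2} the operator $C_\phi$ is already not complex symmetric, so the case that matters is $\omega_0\in\mathbb D$ with $\omega_1\in\mathbb T$. In that case one may take $T=R\circ\phi_{\omega_0}$, where $R$ is the rotation carrying $\phi_{\omega_0}(\omega_1)\in\mathbb T$ to $1$; then $T$ is an automorphism of $\mathbb D$ and the conjugacy $\phi=T^{-1}\circ\psi_s\circ T$ lifts to an operator similarity $C_\phi=C_T\,C_{\psi_s}\,C_T^{-1}$ on $A^2_\beta$.
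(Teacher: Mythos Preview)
Your argument is correct and, in its final paragraph, collapses to exactly the construction the paper uses: conjugate $\phi$ by the disk automorphism $T=R\circ\phi_{\omega_0}$ (the paper writes $\Phi=\overline{\lambda}\phi_{\alpha}$) sending the interior fixed point to $0$ and the boundary fixed point to $1$, then read off the form $\psi_s$. The only genuine difference is how $0<|s|<1$ is obtained. The paper exploits that, because $T$ is a disk automorphism, $\psi_s$ is itself a linear fractional self-map of $\mathbb D$, and then applies the coefficient inequality $|b\overline d-a\overline c|+|ad-bc|\le|d|^2-|c|^2$ directly to $\psi_s$ to squeeze out $|s|\le 1-|1-s|<1$. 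You instead identify $s=\phi'(\omega_0)$ via conjugation-invariance of the multiplier and invoke Schwarz--Pick (interior case) or Julia--Carath\'eodory (boundary case). Your route is a bit more conceptual and does not depend on $T$ being a disk automorphism, which is why you can treat the general M\"obius conjugacy first; the paper's route is more elementary but needs the conjugating map to preserve $\mathbb D$ from the outset. Either way the operator similarity $C_\phi=C_T C_{\psi_s} C_T^{-1}$ needed downstream only holds when $T\in\mathrm{Aut}(\mathbb D)$, and you correctly isolate the case $\omega_0\in\mathbb D$, $\omega_1\in\mathbb T$ as the one where this is available and the only one not already disposed of by Theorem~\ref{thm2}.
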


\begin{proof}
	Let $\phi(\alpha)=\alpha$ for some $\alpha\in \mathbb{D}$ and $\phi(e^{i\theta})=e^{i\theta}.$ Putting $\phi_{\alpha}(e^{i\theta})=\lambda,$ we have $\left| \lambda\right|=1.$ Hence, the mapping $\Phi=\overline{\lambda}\phi_{\alpha}$ is an automorphism of $\mathbb{D}$ with
	$\Phi(\alpha)=0$ and $\Phi(e^{i\theta})=1.$ Since $\mathrm{LFT}(\mathbb{D})$ is a group with the composition of functions, it follows that $\psi=\Phi\circ \phi\circ \Phi^{-1}$ is a linear fractional map of $\mathbb{D}.$ Also $\psi(1)=1$ and $\psi(0)=0.$  Let $a,b,c,d$ be complex numbers such that $\psi(z)=(az+b)(cz+d)^{-1}.$ Then
	we obtain $b=d\psi(0)=0$ and $a/(c+d)=\psi(1)=1.$ Putting $s=(c+d)/d,$ we can rewrite $\psi$ as follows
	\begin{center}
		$
		\displaystyle \psi(z)=\frac{(c+d)z}{cz+d}=\frac{ \left( \frac{c+d}{d}\right)z}{1-\left( -\frac{c}{d}\right)z }=\frac{sz}{1-(1-s)z}=\psi_s(z).$
	\end{center}
	Now it is enough to prove that $0<\left|s\right|<1.$ If $s=0,$ the function $\psi_s$ is identically zero however this contradicts $\psi_s(1)=1.$ If $s=1,$  $\psi_s$ is the identity, and therefore $\phi$ is the identity too. As $\psi_s$ is a linear fractional map of $\mathbb{D},$ we have
	 \begin{align}\label{36}
	\left|s\right|\left(1+\left|1-s\right|\right)\leq \left(1-\left|1-s\right|\right)\left(1+\left|1-s\right|\right).
	\end{align}
	Since $s\neq 1,$ the inequality \eqref{36} forces $\left|s\right|\leq 1-\left|1-s\right|<1.$ This concludes the result.
\end{proof}

Since each hyperbolic linear fractional non-automorphism is similar to $\psi_s,$ we focus our study on the map $\psi_s.$ It is clear that $\psi_s$ has its Denjoy-Wolff point inside the unit disk, so by the discussion before Proposition \ref{pps3} the operator $C_{\psi_s}^*$ is  cyclic on $A^2_{\beta}.$ According to the work of P. S. Bourdon and J. H. Shapiro (see \cite[Proposition 2.7]{sha}), if $T^*$ has \textit{eigenvalues of infinite multiciplicity} (that is, $Ker(T-\lambda I)$ is finite dimensional for all complex number $\lambda$) then $T$ is not cyclic, based on result we show the following:

\begin{lem}
	The operator $C_{\psi_s}$ is not cyclic on $A_{\beta}^2$ for each  $0<\left|s\right|<1.$
\end{lem}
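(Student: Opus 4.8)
The plan is to show that $C_{\psi_s}^*$ has an eigenvalue of infinite multiplicity and then invoke the Bourdon--Shapiro criterion quoted just above. Since $\psi_s$ is a linear fractional self-map fixing $0$, and $\psi_s'(0)=s$, the operator $C_{\psi_s}$ on $A^2_\beta$ behaves in the classical way: one expects the point spectrum of $C_{\psi_s}$ to be $\{s^n:n\geq 0\}$, with each power $s^n$ an \emph{simple} eigenvalue, while the point spectrum of the adjoint $C_{\psi_s}^*$ is much larger. Concretely, I would use the linear-fractional adjoint formula \eqref{frac2}: writing $\psi_s(z)=sz/(1-(1-s)z)$ in the form \eqref{frac1} with $a=s,\ b=0,\ c=-(1-s),\ d=1$, formula \eqref{frac2} gives $C_{\psi_s}=M_gC_\sigma M_h^*$ where $\sigma(z)=\overline{s}z/(\overline{(1-s)}z+1)$ — again a hyperbolic non-automorphism fixing $0$ — and $g,h$ are the explicit weight factors. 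Taking adjoints, $C_{\psi_s}^*=M_hC_\sigma^* M_g^*$; but it is cleaner to argue directly with $C_{\psi_s}^*K_w=K_{\psi_s(w)}$, or to exhibit eigenvectors of $C_{\psi_s}^*$ by hand.

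The key step is to produce, for \emph{one} eigenvalue $\mu$, an infinite-dimensional eigenspace $\ker(C_{\psi_s}^*-\mu I)$. The natural candidate eigenvalue is $\mu=1$ together with the family of reproducing kernels at the boundary fixed point: one checks that $\psi_s$ extends analytically across a neighbourhood of $1$ with $\psi_s(1)=1$, and more importantly that there is a whole disk of points on which iteration under $\sigma$ (the relevant map for the adjoint) contracts. A robust way: use \eqref{frac2} to write $C_{\psi_s}^*=M_hC_\sigma M_g^*$, observe $\sigma$ is a hyperbolic linear fractional self-map of $\mathbb D$ fixing $0$, and recall the standard fact that for such a map $C_\sigma$ on $A^2_\beta$ the eigenvalue equation $C_\sigma u=\mu u$ has, for each $\mu$ in an open subset of $\mathbb C$, a solution; more efficiently, note that the point spectrum of $C_\sigma$ contains an open annulus/disk because $\sigma'(0)=\overline s$ has modulus $<1$ and the linearizing (Koenigs) coordinate $u$ satisfies $u\circ\sigma=\overline s\,u$, so $u^t$ for $t$ in a half-plane of exponents gives eigenvectors $C_\sigma u^t=\overline s^{\,t}u^t$ lying in $A^2_\beta$ for a range of $t$; hence infinitely many distinct eigenvalues each with at least a one-dimensional eigenspace, and by reparametrising one obtains a \emph{fixed} eigenvalue with an infinite-dimensional eigenspace. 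Transporting back through the invertible-on-its-range factors $M_h$ and $M_g^*$ preserves dimensions of eigenspaces, so $C_{\psi_s}^*$ inherits an eigenvalue of infinite multiplicity.

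In more detail, the argument I would carry out is: (i) normalise via Lemma~\ref{11} to $\psi_s$; (ii) apply \eqref{frac2} to get $C_{\psi_s}=M_gC_\sigma M_h^*$ with $\sigma$ hyperbolic and fixing $0$, hence $C_{\psi_s}^*=M_hC_\sigma^*M_g^*$; (iii) produce eigenvectors of $C_\sigma^*$ of infinite multiplicity --- for instance, since $\sigma$ maps a Stolz-type region at its boundary fixed point into itself, the kernels $K_w$ with $w$ on a suitable curve accumulating at that fixed point satisfy $C_\sigma^*K_w=K_{\sigma(w)}$, and one extracts from this family an infinite linearly independent set inside a single eigenspace by using the Koenigs eigenfunction to get genuine eigenvectors $\{K_w^{(j)}\}$ for a common eigenvalue; (iv) conclude $\ker(C_{\psi_s}^*-\mu I)$ is infinite-dimensional for some $\mu$; (v) since the point spectrum of $C_{\psi_s}$ equals $\{s^n\}$ with each eigenspace one-dimensional --- here the Koenigs model for $\psi_s$ at $0$ forces any eigenfunction for $s^n$ to be a scalar multiple of $(\text{Koenigs map})^n$ --- every eigenvalue of $C_{\psi_s}$ has finite multiplicity; so by \cite[Proposition 2.7]{sha} the operator $C_{\psi_s}$ is not cyclic.

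The main obstacle is step (iii)--(iv): turning the family of boundary reproducing kernels (which a priori are eigenvectors of $C_{\psi_s}^*$ for \emph{pairwise distinct} eigenvalues, since $\psi_s(w)\neq w$ generically) into an infinite-dimensional eigenspace for a \emph{single} eigenvalue. The clean route is the Koenigs/linearisation picture: because $\sigma$ is a hyperbolic non-automorphism with interior fixed point $0$ and multiplier $\overline s$, its linearising map $\sigma_0$ satisfies $\sigma_0\circ\sigma=\overline s\,\sigma_0$, and then for any bounded analytic $F$ one has $C_\sigma^*$ having eigenvectors built from $\sigma_0$; concretely the functions $z\mapsto \sigma_0(z)^{-k}$-type objects, or better, one checks directly that $C_{\psi_s}^*$ has \emph{every} point of a punctured disk as an eigenvalue, each of infinite multiplicity, which is the known spectral picture for hyperbolic non-automorphic composition operators on Bergman-type spaces. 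I would cite \cite{cow} (Chapter~7) for this spectral computation rather than reprove it, and then the non-cyclicity of $C_{\psi_s}$ is immediate. Finally, cyclicity being preserved under similarity, Lemma~\ref{11} upgrades the conclusion from $\psi_s$ to every hyperbolic linear fractional non-automorphism $\phi$.
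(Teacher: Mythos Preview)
Your overall strategy---show that $C_{\psi_s}^*$ has an eigenvalue of infinite multiplicity and invoke the Bourdon--Shapiro criterion---is exactly the paper's. But there is a genuine gap in your execution at the ``transport'' step. The Cowen/Hurst factorisation $C_{\psi_s}=M_gC_\sigma M_h^*$ is \emph{not} a similarity: $M_g$ and $M_h^*$ are not inverses of one another, so eigenvectors of $C_\sigma$ (or of $C_\sigma^*$) do not pass to eigenvectors of $C_{\psi_s}^*$ in the way you suggest. Your sentence ``transporting back through the invertible-on-its-range factors $M_h$ and $M_g^*$ preserves dimensions of eigenspaces'' is false in general: if $C_\sigma^* v=\mu v$ and $M_g^* w=v$, then $C_{\psi_s}^* w=M_h C_\sigma^* M_g^* w=\mu M_h v$, which equals $\mu w$ only if $M_g^*M_h v=v$. (Incidentally, your computation of $\sigma$ from \eqref{frac2} is also off: with $a=s$, $b=0$, $c=-(1-s)$, $d=1$ one gets $\sigma(z)=\bar s z+1-\bar s$, an affine map, not a linear fractional one fixing $0$.)

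The paper circumvents this by citing a sharper result of Hurst \cite[Theorem~5]{Hurst}: the restriction $C_{\psi_s}^*|_{H_0}$ (where $H_0=\{f:f(0)=0\}$, which reduces $C_{\psi_s}$) is genuinely \emph{similar} to $sC_\sigma$ with $\sigma(z)=sz+1-s$. Once you have a bona fide similarity, the eigenvector construction is explicit and elementary: $f_\lambda(z)=(1-z)^\lambda$ satisfies $C_\sigma f_\lambda=s^\lambda f_\lambda$, and the countable family $\lambda(k)=\lambda+2\pi i k/\log s$ (for $k\in\mathbb Z$, with $\mathrm{Re}\,\lambda>-(\beta+2)/2$ so that $f_{\lambda(k)}\in A^2_\beta$) all share the eigenvalue $s^\lambda$ and are linearly independent. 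This is precisely the ``reparametrising to get a fixed eigenvalue of infinite multiplicity'' that you gestured at but did not pin down. Finally, your step (v) is both unnecessary and misstated: once $C_{\psi_s}^*$ has an eigenvalue of infinite multiplicity, \cite[Proposition~2.7]{sha} immediately gives that $C_{\psi_s}$ is not cyclic; the simplicity of the eigenvalues of $C_{\psi_s}$ itself plays no role.
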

\begin{proof}
By denoting $H_0=\{f\in A^2_{\beta}:f(0)=0\},$ P. R. Hurst in \cite[Theorem  5]{Hurst} showed that $C_{\psi_s}^*|_{H_0}$ is similar to the $sC_{\sigma}$ where $\sigma(z)=sz+1-s.$ In this work he also showed that for each $\mathrm{Re}(\lambda)>-(\beta+2)/2,$ the function $f_{\lambda} \in A^2_{\beta},$ $f_{\lambda}(z)=(1-z)^{\lambda},$ is an eigenvector for $C_{\sigma}$ corresponding to the eigenvalue $\lambda.$ So, for each integer $k$ and $\lambda(k)=\lambda+2\pi i k/\log s,$ we see $C_{\sigma}f_{\lambda(k)}=s^{\lambda}f_{\lambda(k)}.$ In particular, this last equality show that $s^{\lambda}$ is a eigenvalue of infinite multiplicity for $C_{\sigma}$ since the setting $\{f_{\lambda(k)}:k\in \mathbb{Z}\}$ is linearly independent, and thefore $C_{\sigma}^*$ is not cyclic. Since cyclicity is invariant under similarity and $H_0$ reduces the operator $C_{\psi_s},$ it follows that $C_{\psi_s}$ is not cyclic too.
\end{proof}
It follows that hyperbolic linear fractional non-automorphism do not induce complex symmetric composition operators. This is consequence of the strong relation between cyclicity and complex symmetry.

\begin{thm}\label{12} Let $\phi$ a hyperbolic linear fractional map then $C_{\phi}$ is not complex symmetric on $A^2_{\beta}$. 
\end{thm}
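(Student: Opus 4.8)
The plan is to derive a contradiction from the assumption that $C_\phi$ is complex symmetric, by pitting the cyclicity that complex symmetry forces on $C_\phi$ (Proposition \ref{pps3}) against the \emph{non}-cyclicity of the hyperbolic model operator $C_{\psi_s}$ established in the preceding lemma. All the analytic work has in effect been front-loaded into the earlier results, so this proof is mostly an assembly step.

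First I would dispose of the automorphism case. If $\phi$ is a hyperbolic \emph{automorphism} of $\mathbb{D}$, then by the classification recalled in Section \ref{4} both of its fixed points lie on $\mathbb{T}$, so its Denjoy--Wolff point lies on $\mathbb{T}$; since a hyperbolic automorphism is not an elliptic automorphism, Theorem \ref{thm2} shows immediately that $C_\phi$ cannot be complex symmetric. Hence it suffices to treat a hyperbolic linear fractional \emph{non}-automorphism, which is exactly the setting of Lemma \ref{11}.

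So suppose $\phi$ is a hyperbolic non-automorphism and, for contradiction, that $C_\phi$ is complex symmetric on $A^2_\beta$. A hyperbolic linear fractional map is nonconstant, and it is not an elliptic automorphism (an elliptic map has one fixed point in $\mathbb{D}$ and the other outside $\overline{\mathbb{D}}$, while a hyperbolic map has a fixed point on $\mathbb{T}$), so Proposition \ref{pps3} applies and yields that $C_\phi$ is cyclic on $A^2_\beta$. On the other hand, Lemma \ref{11} provides $s$ with $0<|s|<1$ and an automorphism $\Phi$ of $\mathbb{D}$ such that $\psi_s=\Phi\circ\phi\circ\Phi^{-1}$, equivalently $\phi=\Phi^{-1}\circ\psi_s\circ\Phi$; using the composition rule $C_{g\circ h}=C_h C_g$ together with $C_{\Phi^{-1}}=C_\Phi^{-1}$ this gives $C_\phi=C_\Phi\,C_{\psi_s}\,C_\Phi^{-1}$, so $C_\phi$ and $C_{\psi_s}$ are similar. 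Since cyclicity is invariant under similarity, $C_{\psi_s}$ would then be cyclic, contradicting the lemma asserting that $C_{\psi_s}$ is not cyclic on $A^2_\beta$. This contradiction shows that $C_\phi$ is not complex symmetric.

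I do not expect a genuine obstacle here; the only points requiring care are bookkeeping. One is translating the function-level conjugacy $\psi_s=\Phi\circ\phi\circ\Phi^{-1}$ into an operator similarity $C_\phi=C_\Phi C_{\psi_s}C_\Phi^{-1}$ while respecting the order reversal in $C_{g\circ h}=C_h C_g$. The other is confirming that a hyperbolic linear fractional map really lies outside the ``constant or elliptic automorphism'' exclusion in Proposition \ref{pps3}, which is immediate from the fixed-point classification. The conceptual core --- that complex symmetry forces $C_\phi$ to be cyclic while the hyperbolic model has adjoint with eigenvalues of infinite multiplicity and hence is not cyclic --- is supplied verbatim by Sections \ref{3} and \ref{4}.
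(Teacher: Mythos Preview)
Your proof is correct and follows essentially the same route as the paper: for the automorphism case you invoke Theorem \ref{thm2}, and for the non-automorphism case you combine the similarity $C_\phi\sim C_{\psi_s}$ from Lemma \ref{11}, the non-cyclicity of $C_{\psi_s}$, and Proposition \ref{pps3} to obtain a contradiction. The only cosmetic difference is that the paper phrases the non-automorphism step via the contrapositive of Proposition \ref{pps3} (``$C_\phi$ and $C_\phi^*$ are not simultaneously cyclic, hence $C_\phi$ is not complex symmetric''), whereas you assume complex symmetry first and derive cyclicity of $C_{\psi_s}$ as the contradiction.
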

\begin{proof} First we consider the non-automorphism case. By Lemma \ref{11}, $C_{\phi}$ is similar to the $C_{\psi_s}$ for some $0<\left|s\right|<1.$  Since cyclicity is invariant under similarity it follows that $C_{\phi}^*$ and $C_{\phi}$ are not simultaneously cyclic, and therefore by Proposition \ref{pps3} we conclude that $C_{\phi}$ is not complex symmetric. The automorphism case follows by Theorem \ref{thm2}.
\end{proof}

Due to Theorem \ref{12} it remains to classify the complex symmetric composition operators induced by linear fractional self-maps with a fixed point inside $\mathbb{D}$ and outside $\overline{\mathbb{D}}.$ It is worth mentioning that even in the Hardy space $H^2(\mathbb{D})$ this case remains open.

\section{The symbol $\phi=\phi_{\alpha}\circ(\lambda\phi_{\alpha})$} \label{5}
In this section the map $\phi$ denotes the linear fractional self-map $\phi_{\alpha}\circ (\lambda \phi_{\alpha})$ where $ \lambda \in \overline{\mathbb{D}}$ and $\phi_\alpha$ are the involutive automorphism (see \eqref{0}). As we saw in \eqref{pre2} the map $\phi$ is an automorphism of $\mathbb{D}$ whenever $\lambda\in \mathbb{T}$. Here, we will see that a location of the numbers $\alpha$ and $\lambda$ play an important in deciding if $C_{\phi}$ is complex symmetric. The first result on the map $\phi$ appeared in \cite[Proposition 2.1.]{Garc1} In this work S. R. Garcia and C. Hammond showed that $\phi_{\alpha}$ and constants always induce complex symmetric composition operators on $A^2_{\beta}.$ In this direction,  we must also highlight the work \cite{wal} where P. S. Bourdon and S. W. Noor presented an almost complete characterization of automorphisms of $\mathbb{D}$ which induce complex symmetric composition operators on $H^2(\mathbb{D}).$ The composition operators induced by automorphisms were studied on $A^2,$ by T. Eklund, M. Lindström, P. Mleczko where they showed that the techniques used on $H^2(\mathbb{D})$ can be adapted for $A^2.$

By comparing the results on $A_{-1}^2:=H^2(\mathbb{D})$ and $A^2:=A^2_0(\mathbb{D}),$ we note that the index $\beta$ strongly influence the complex symmetry of the invertible bounded composition operators. Here we will unify these result by proving a general version of \cite[Proposition 3.3]{wal} and \cite[Theorem 10]{Ted}. We start by studying  $\phi=\phi_{\alpha}\circ(\lambda\phi_{\alpha})$ with $\lambda\in \mathbb{D}.$ If $\lambda=0$ then the symbol $\phi$ is constant while $\alpha=0$ gives $\phi(z)=\lambda z.$ So, we assume $\alpha$ and $\lambda$ are non-zero.
\begin{lem}\label{lem2}
	Let $\beta\in \mathbb{N}.$ If $\alpha\in \mathbb{D}\backslash\{0\}$ and $v_n:=C_{\phi_{\alpha}}^*z^n$ for each non-negative integer $n$ then $v_n$ is orthogonal to $v_0$ whenever $n>2+\beta.$
	\end{lem}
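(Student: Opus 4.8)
The strategy is to compute the inner product $\langle v_n, v_0\rangle = \langle C_{\phi_\alpha}^* z^n, C_{\phi_\alpha}^* 1\rangle$ explicitly and show it vanishes when $n > 2+\beta$. Since $C_{\phi_\alpha}^* 1 = K_{\phi_\alpha(0)} = K_\alpha$ (using the adjoint formula $C_\phi^* K_w = K_{\phi(w)}$ together with $\phi_\alpha(0)=\alpha$, or simply the representation in \eqref{658}), we have $\langle v_n, v_0\rangle = \langle C_{\phi_\alpha}^* z^n, K_\alpha\rangle = \overline{(C_{\phi_\alpha}^* z^n)(\alpha)}$ by the reproducing property. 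So the whole problem reduces to evaluating $(C_{\phi_\alpha}^* z^n)(\alpha)$ and showing it is zero for $n > 2+\beta$. The plan is to feed this through the explicit expansion \eqref{658}: $C_{\phi_\alpha}^* z^n = \sum_{k=0}^{2+\beta}\binom{2+\beta}{k}(-\alpha)^k M_{K_\alpha} C_{\phi_\alpha}(M_z^*)^k z^n$.

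The main computational step is to understand $(M_z^*)^k z^n$. By Lemma \ref{lem1}, applying $M_z^*$ lowers the degree of a monomial by one and multiplies by an explicit ratio of Gamma factors; iterating $k$ times sends $z^n$ to a positive multiple of $z^{n-k}$ (and to $0$ if $k>n$, but here $k\le 2+\beta<n$). Then $C_{\phi_\alpha}(z^{n-k}) = \phi_\alpha^{n-k}$, and applying $M_{K_\alpha}$ multiplies by $K_\alpha(z) = (1-\bar\alpha z)^{-(2+\beta)}$. Finally I evaluate at $z=\alpha$: note $\phi_\alpha(\alpha) = 0$, so $\phi_\alpha(\alpha)^{n-k} = 0$ for every $k$ in the range $0\le k\le 2+\beta$ precisely when $n-k \ge 1$, i.e. when $n > 2+\beta$. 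Hence every term in the sum \eqref{658}, when evaluated at $\alpha$, contains the factor $\phi_\alpha(\alpha)^{n-k} = 0^{n-k} = 0$, so $(C_{\phi_\alpha}^* z^n)(\alpha) = 0$ and therefore $\langle v_n, v_0\rangle = 0$.

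The step I expect to require the most care is keeping track of the fact that $(M_z^*)^k z^n$ is a \emph{nonzero} multiple of $z^{n-k}$ — one must check the Gamma-factor coefficients never vanish (they don't, being products of positive reals since $\beta\ge 0$) — and, more importantly, verifying that the exponent $n-k$ really stays at least $1$ throughout: this is exactly the hypothesis $n > 2+\beta \ge k$. One subtlety worth stating explicitly is why $C_{\phi_\alpha}$ commutes appropriately with evaluation here; in fact we don't need commutation, only the pointwise identity $(C_{\phi_\alpha} p)(\alpha) = p(\phi_\alpha(\alpha)) = p(0)$ for a polynomial $p$, combined with the fact that $(M_z^*)^k z^n$ has zero constant term when $n>k$. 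So the cleanest route may actually be to bypass \eqref{658} entirely after the reduction: write $C_{\phi_\alpha}^* z^n = M_{K_\alpha} C_{\phi_\alpha} M_{1/K_\alpha}^* z^n$, evaluate at $\alpha$ to get $K_\alpha(\alpha)\cdot (M_{1/K_\alpha}^* z^n)(\phi_\alpha(\alpha)) = K_\alpha(\alpha)\cdot(M_{1/K_\alpha}^* z^n)(0)$, and observe via \eqref{pre3} that $M_{1/K_\alpha}^* = \sum_{k=0}^{2+\beta}\binom{2+\beta}{k}(-\bar\alpha)^k (M_z^*)^k$ sends $z^n$ to a polynomial all of whose terms have degree $n-k \ge n-(2+\beta) \ge 1$, hence with vanishing constant term. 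Either way, the punchline is that the constant term of $M_{1/K_\alpha}^* z^n$ vanishes exactly when $n>2+\beta$.
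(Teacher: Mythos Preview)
Your argument is correct, modulo a harmless slip: the reproducing property gives $\langle f,K_\alpha\rangle=f(\alpha)$, not $\overline{f(\alpha)}$, and after taking adjoints in \eqref{pre3} the coefficient in $M_{1/K_\alpha}^*$ is $(-\alpha)^k$, not $(-\bar\alpha)^k$. Neither affects the conclusion, since you only use that the quantity vanishes.

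That said, your route is noticeably longer than the paper's. The paper moves the adjoint to the other side of the inner product rather than invoking the reproducing property: from $v_0=C_{\phi_\alpha}^*1=K_\alpha$ one writes
\[
\langle v_n,v_0\rangle=\langle C_{\phi_\alpha}^*z^n,K_\alpha\rangle=\langle z^n,C_{\phi_\alpha}K_\alpha\rangle,
\]
and then simply observes that $C_{\phi_\alpha}K_\alpha(z)=K_\alpha(\phi_\alpha(z))=(1-|\alpha|^2)^{-(2+\beta)}(1-\bar\alpha z)^{2+\beta}$ is a polynomial of degree $2+\beta$ (this is where $\beta\in\mathbb{N}$ is used), hence orthogonal to $z^n$ for $n>2+\beta$. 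No appeal to \eqref{658}, Lemma~\ref{lem1}, or the action of $(M_z^*)^k$ on monomials is needed. Your approach, by contrast, computes $C_{\phi_\alpha}^*z^n$ via the full adjoint formula and evaluates at $\alpha$ using $\phi_\alpha(\alpha)=0$; this works, but it front-loads machinery (the iterated backward shift and the expansion \eqref{658}) that the paper only introduces later for the harder Lemma on $v_n\perp v_m$ with $|n-m|\ge 3+\beta$. The advantage of your version is that it rehearses exactly that machinery, so the subsequent lemma becomes a straightforward generalization; the advantage of the paper's version is brevity.
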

\begin{proof} A simple computation shows that the action of $C_{\phi_{\alpha}}$ on $K_{\alpha}$ is given by
\begin{equation}\label{q51}
C_{\phi_{\alpha}}K_{\alpha}(z)= \frac{(1-\overline{\alpha}z)^{2+\beta}}{(1-\left|\alpha\right|^2)^{2+\beta}},
\end{equation}
and therefore the computation \eqref{sym2} provides
\begin{align}\label{sym3}
\left\langle v_n,v_0\right\rangle=&
\left\langle C_{\phi_{\alpha}}^{*}z^n,K_{\alpha}\right\rangle\nonumber
= \left\langle z^n,C_{\phi_{\alpha}}K_{\alpha}\right\rangle\nonumber
=\displaystyle \frac{1}{(1-\left|\alpha\right|^2)^{2+\beta}}\left\langle z^n,(1-\overline{\alpha}z)^{2+\beta}\right\rangle\nonumber\\
=&\frac{1}{(1-\left|\alpha\right|^2)^{2+\beta}}\sum_{k=0}^{2+\beta} {{2+\beta}\choose k }\left\langle z^n,(-\overline{\alpha}z)^k\right\rangle
.
\end{align}
From \eqref{sym3} we see that $v_n$ is orthogonal to $v_0$ whenever $2+\beta>0.$
\end{proof}
Due to the generalized Newton binomial formula, we see that the conclusion of Lemma \ref{lem2} fails if $2+\beta\notin \mathbb{N}$ since
\begin{align}\label{rem1}
(1-\overline{\alpha}z)^{2+\beta}=\sum_{k=0}^{\infty}{{2+\beta}\choose k }(-\overline{\alpha}z)^k \ \text{and} \ \left\langle v_n,v_0\right\rangle ={2+\beta\choose n}\left\| z^n\right\| ^2.
\end{align}

The Theorem \ref{thm6} shows that in general the converse of Proposition \ref{pps3} is not true More precisely, the cyclicity of $C_{\phi}$ and $C_{\phi}^*$ is not enough to guarantee the complex symmetry of $C_{\phi}.$ In fact, if we consider the symbol $\phi=\phi_{\alpha}\circ(\lambda \phi_{\alpha})$ with $\alpha, \lambda\in \mathbb{D}\backslash \{0\}$ then $\phi$ is neither constant nor elliptic automorphism of $\mathbb{D}$ and $\phi(\alpha)=\alpha.$ These conditions together with \cite[Theorem 3]{Worner} and \cite[Theorem 3.2.]{sha} guarantee that $C_{\phi}^*$ and $C_{\phi}$ are cyclic on $H^2(\mathbb{D}).$ In \cite{anna}, A. Gori studied cyclic phenomena for composition operators on weighted Bergman space, in particular she showed that cyclic operators on $H^2(\mathbb{D})$ are cyclic on $A^2_{\beta}$ too (see \cite[Theorem 1.2]{anna}). Therefore, $C_{\phi}$ and $C_{\phi}^*$ are cyclic on $A^2_{\beta}.$ 

\begin{thm}\label{thm6}
	Let $\beta\in\mathbb{N} .$ If $\phi=\phi_{\alpha}\circ(\lambda \phi_{\alpha})$ with $\alpha, \lambda\in \mathbb{D}\backslash \{0\}$ then $C_{\phi}$ is not complex symmetric on $A^2_{\beta}.$
\end{thm}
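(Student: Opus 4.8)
The plan is to argue by contradiction: assume $C_\phi$ is $C$-symmetric for some conjugation $C$, and derive a contradiction by exploiting the spectral structure of $C_\phi$ together with the orthogonality information in Lemma \ref{lem2}. Since $\phi = \phi_\alpha\circ(\lambda\phi_\alpha)$ with $\lambda\in\mathbb D\setminus\{0\}$ is a linear fractional non-automorphism fixing $\alpha\in\mathbb D$, it is conjugate (via $\phi_\alpha$) to the map $z\mapsto \lambda z$, whose associated composition operator on $A^2_\beta$ is diagonal with eigenvalues $\lambda^n$ (eigenvectors $z^n$). Conjugating back, $C_\phi$ has eigenvalues $\lambda^n$, $n\in\mathbb N$, with eigenvectors $\phi_\alpha^n = C_{\phi_\alpha}z^n$, and these eigenvalues are all distinct since $0<|\lambda|<1$. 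Because $\lambda\ne 0$, the spans of the $\phi_\alpha^n$ are dense in $A^2_\beta$ (as noted in the preliminaries, $C_{\phi_\alpha}$ is invertible and carries the dense span of $(z^n)$ onto a dense set).

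Next I would invoke the spectral symmetry \eqref{37}: since $C_\phi\,\phi_\alpha^n = \lambda^n\phi_\alpha^n$, we get $C_\phi^*\,(C\phi_\alpha^n) = \overline{\lambda^n}\,(C\phi_\alpha^n)$, so $C\phi_\alpha^n$ is an eigenvector of $C_\phi^*$ for $\overline{\lambda^n}$. Now apply the formula for $C_\phi^*$ coming from \eqref{frac2} — since $\phi$ is linear fractional and $2+\beta\in\mathbb N$, $C_\phi^*$ is a product of multiplication operators (by polynomials of degree $2+\beta$ and by $1/(\text{poly})$) with another composition operator; more directly, one can realize $C_\phi^*$ via the $\phi_\alpha$-conjugate of $C_{\lambda z}^*$, which is again diagonal. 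The eigenvectors of $C_\phi^*$ for $\overline{\lambda^n}$ are therefore (scalar multiples of) the $v_n = C_{\phi_\alpha}^* z^n$ up to the multiplication-operator twist. The key structural fact — this is the analogue of \cite[Lemma 2.2]{wal} and \cite[Lemma 5]{Ted} — is that $C\phi_\alpha^n$ must be proportional to $v_n$ (the $\overline{\lambda^n}$-eigenspace of $C_\phi^*$ is one-dimensional because the corresponding eigenspace of $C_\phi$ is), so $C\phi_\alpha^n = c_n v_n$ for nonzero scalars $c_n$.

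Then I would use the standard fact (stated in the preliminaries) that for a $C$-symmetric operator, eigenvectors for distinct eigenvalues $f,g$ satisfy $\langle f, Cg\rangle = 0$. Applying this with the eigenvectors $\phi_\alpha^n$ and $\phi_\alpha^0 = 1$ of $C_\phi$ (distinct eigenvalues $\lambda^n\ne 1$ for $n\ge 1$) gives $\langle \phi_\alpha^n, C\phi_\alpha^0\rangle = 0$, i.e.\ $\langle \phi_\alpha^n, c_0 v_0\rangle = 0$, hence $\langle C_{\phi_\alpha}z^n, v_0\rangle = \langle z^n, C_{\phi_\alpha}^* v_0\rangle$ — but more to the point, the relation $C\phi_\alpha^n = c_n v_n$ together with $C^2 = I$ and the conjugate-linearity/isometry of $C$ forces a rigidity among the Gram matrices: $\langle \phi_\alpha^n,\phi_\alpha^m\rangle = \langle C\phi_\alpha^m, C\phi_\alpha^n\rangle = \overline{c_m}c_n\langle v_m,v_n\rangle$. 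By Lemma \ref{lem2}, $\langle v_n, v_0\rangle = 0$ for all $n>2+\beta$, while $\langle \phi_\alpha^n,\phi_\alpha^0\rangle = \langle C_{\phi_\alpha}z^n, 1\rangle = \overline{(C_{\phi_\alpha}^*1)(0)}$-type quantities that are manifestly nonzero for infinitely many $n$ (indeed $C_{\phi_\alpha}z^n = \phi_\alpha^n$ and $\langle \phi_\alpha^n,1\rangle = \widehat{\phi_\alpha^n}(0) = \phi_\alpha(0)^n = \alpha^n\ne 0$ since $\alpha\ne 0$). This contradiction — nonzero inner products on the left, forced-zero on the right — completes the proof.

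The main obstacle is the rigidity step: proving that the conjugation $C$ must send the $\lambda^n$-eigenvector $\phi_\alpha^n$ to a scalar multiple of $v_n$ (equivalently, that the relevant eigenspaces are one-dimensional and that $C$ respects the pairing between them). This requires care about whether $C_\phi$ and $C_\phi^*$ might have additional eigenvectors for $\lambda^n$ beyond the expected ones, and about the precise form of $C_\phi^*$ via \eqref{658}; I expect one must combine the explicit adjoint formula \eqref{658} with the density of $\mathrm{span}(\phi_\alpha^n)$ to pin down the eigenspaces, exactly as the twist by $M_{K_\alpha}$ and $(M_z^*)^k$ in \eqref{658} is what makes the orthogonality bookkeeping (and hence the choice of the threshold $n>2+\beta$) work.
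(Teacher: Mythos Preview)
Your argument is correct and essentially identical to the paper's: assume $C$-symmetry, use \eqref{37} to write $C\phi_\alpha^n = c_n v_n$ with $v_n = C_{\phi_\alpha}^* z^n$, and contrast $\langle v_n, v_0\rangle = 0$ for $n > 2+\beta$ (Lemma~\ref{lem2}) with $\langle \phi_\alpha^n, K_0\rangle = \alpha^n \neq 0$ via the isometry of $C$. The ``obstacle'' you flag --- simplicity of $\mathrm{Ker}(C_\phi^* - \overline{\lambda}^n I)$ --- is handled in the paper by the one-line observation that $C_\phi$ is cyclic (established in the paragraph immediately preceding the theorem), hence $C_\phi^*$ has only simple eigenvalues; your own parenthetical remark that, under the $C$-symmetry hypothesis, $C$ bijects $\mathrm{Ker}(C_\phi - \lambda^n I)$ onto $\mathrm{Ker}(C_\phi^* - \overline{\lambda}^n I)$ (and the former is one-dimensional via the similarity $C_\phi = C_{\phi_\alpha} C_{\lambda z} C_{\phi_\alpha}$) is an equally valid route, so there is no genuine gap.
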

\begin{proof}
	Since $C_{\phi}$ is cyclic, the eigenvalues of $C_{\phi}^*$ are simple. Putting $v_n:=C_{\phi_{\alpha}}^*z^n$ for each integer non-negative $n,$ we have  $C_{\phi}^*v_n=\overline{\lambda}^nv_n,$ hence $Ker(C_{\phi}^*-\overline{\lambda}^nI)$ is generated by $v_n.$ Suppose that $C_{\phi}$ is complex symmetric, and let $C$ be a conjugation on $A^2_{\beta}$ such that $CC_{\phi}C=C_{\phi}^*.$  As we saw above, $C_{\phi}\phi_{\alpha}^n=\lambda^n\phi_{\alpha}$ for each non-negative integer $n,$ so the relation \eqref{37}
implies that $C_{\phi}^*C\phi_{\alpha}^n=\overline{\lambda}^nC\phi_{\alpha}^n.$	From what we saw earlier this last equality implies that $C\phi_{\alpha}^n=r_nv_n$ for some complex number $r_n.$ Let $n>2+\beta,$ then Lemma \ref{lem2} implies that $v_n$ is orthogonal to $v_0,$ hence
	\begin{align*}
	\alpha^n=[\phi_{\alpha}(0)]^n=\left\langle \phi_{\alpha}^n,K_0\right\rangle =\left\langle CK_0,C\phi_{\alpha}^n\right\rangle =r_0\overline{r_n}\left\langle v_0,v_n\right\rangle =0.
	\end{align*}
	This last equality forces $\alpha=0,$ contradicting the hypothesis $\alpha\neq 0.$
	
\end{proof}

\subsection{Elliptic automorphism } Let $\phi$ be an elliptic automorphism of $\mathbb{D},$ then it has the form $\phi=\phi_{\alpha}\circ(\lambda\phi_{\alpha})$ for some $\alpha\in \mathbb{D}$ and $\lambda\in \mathbb{T}$ (see \eqref{frac2}). Since $\lambda$ is an unitary number, we define the order of $\phi$ through the number $\lambda.$ More precisely, we say that $\phi$ has \textit{finite order} $N,$ if $N$ is the smallest positive integer for which $\lambda^N=1.$ If no such integer exists then $\phi$ is said to have \textit{infinite order.}

Our next goal is to prove that elliptic automorphisms of infinite order which are not rotations do not induce complex symmetric composition operators on $A^2_{\beta}.$

\begin{thm}\label{thm13}
	Suppose that $\phi$ is an elliptic automorphism of infinite order and not a rotation. Then $C_{\phi}$ is not complex symmetric on $A^2_{\beta}.$
\end{thm}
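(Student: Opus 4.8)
The plan is to mimic the structure of the proof of Theorem \ref{thm6}, but now with $\lambda \in \mathbb{T}$ of infinite order rather than $\lambda \in \mathbb{D}\setminus\{0\}$. Write $\phi = \phi_\alpha \circ (\lambda \phi_\alpha)$ with $\alpha \in \mathbb{D}\setminus\{0\}$ (the case $\alpha = 0$ makes $\phi$ a rotation, which is excluded). Set $v_n := C_{\phi_\alpha}^* z^n$ for each $n \in \mathbb{N}$. As already observed in the excerpt, $C_\phi \phi_\alpha^n = \lambda^n \phi_\alpha^n$, so that $\phi_\alpha^n$ is an eigenvector of $C_\phi$ with eigenvalue $\lambda^n$, and dually $C_\phi^* v_n = \overline{\lambda}^n v_n$. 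Because $\lambda$ has infinite order, the numbers $\lambda^n$, $n \in \mathbb{N}$, are pairwise distinct; hence $\{\phi_\alpha^n\}$ (resp. $\{v_n\}$) are eigenvectors of $C_\phi$ (resp. $C_\phi^*$) for distinct eigenvalues.

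Now suppose for contradiction that $C_\phi$ is $C$-symmetric for some conjugation $C$, i.e. $C C_\phi C = C_\phi^*$. By the spectral symmetry \eqref{37}, $C\phi_\alpha^n \in \mathrm{Ker}(C_\phi^* - \overline{\lambda}^n I)$. The key point I would establish is that this kernel is one-dimensional, spanned by $v_n$, so that $C\phi_\alpha^n = r_n v_n$ for some scalar $r_n \neq 0$. For this I need to know the eigenvalues $\overline{\lambda}^n$ of $C_\phi^*$ are simple. One route: invoke that $\phi = \phi_\alpha\circ(\lambda\phi_\alpha)$ is an elliptic automorphism with fixed point $\alpha \in \mathbb{D}$, so $C_\phi$ is similar (via $C_{\phi_\alpha}$, an involution) to $C_{\lambda\phi_\alpha \circ \phi_\alpha}$... more cleanly, conjugating by $C_{\phi_\alpha}$ shows $C_\phi$ is similar to the operator $f \mapsto f(\lambda \phi_\alpha(\phi_\alpha(\cdot)))$; but since $\phi_\alpha$ is an involution this is just $C_{R_\lambda}$ where $R_\lambda(z) = \lambda z$ is a rotation. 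The rotation operator $C_{R_\lambda}$ is diagonal in the monomial basis with distinct eigenvalues $\lambda^n$ (infinite order!), hence has simple spectrum; similarity preserves simplicity of eigenvalues, so each $\mathrm{Ker}(C_\phi - \lambda^n I)$ and each $\mathrm{Ker}(C_\phi^* - \overline{\lambda}^n I)$ is one-dimensional. This gives $C\phi_\alpha^n = r_n v_n$.

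With that in hand the contradiction is immediate, exactly as in Theorem \ref{thm6}: pick $n > 2 + \beta$. By Lemma \ref{lem2}, $v_n \perp v_0$. Then, using $\langle Cf, Cg\rangle = \langle g, f\rangle$ and $\langle f, K_0\rangle = f(0)$,
\begin{align*}
\alpha^n = [\phi_\alpha(0)]^n = \langle \phi_\alpha^n, K_0\rangle = \langle C K_0, C\phi_\alpha^n\rangle = r_0 \overline{r_n} \langle v_0, v_n\rangle = 0,
\end{align*}
forcing $\alpha = 0$, which contradicts $\alpha \neq 0$. Hence $C_\phi$ is not complex symmetric. I expect the main obstacle to be the justification that the eigenvalues of $C_\phi^*$ are simple: in Theorem \ref{thm6} this came for free from cyclicity of $C_\phi$ (via the Bourdon–Shapiro machinery), but here $\phi$ is an automorphism so $C_\phi$ is invertible and that argument does not directly apply; the similarity-to-a-rotation observation above should be the right substitute, though one must be careful that $C_{\phi_\alpha}$ indeed conjugates $C_\phi$ to $C_{R_\lambda}$ and that similarity genuinely transports simplicity of the point spectrum to the adjoint side. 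An alternative, should the similarity argument prove delicate, is to show directly that any eigenvector $h$ of $C_\phi^*$ with eigenvalue $\overline{\lambda}^n$ must be a scalar multiple of $v_n$ by expanding $h$ against the (total) family $\{\phi_\alpha^m\}_{m\in\mathbb{N}}$ and using $\langle h, \phi_\alpha^m\rangle = \langle h, C_{\phi_\alpha} z^m\rangle$ together with the eigen-relation to kill all terms $m \neq n$.
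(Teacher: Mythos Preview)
Your proof is correct but follows a route that differs from the paper's in two places. Both arguments arrive at the key intermediate relation $C\phi_\alpha^n = r_n v_n$, but by different means: you obtain it by noting that $C_{\phi_\alpha} C_\phi C_{\phi_\alpha} = C_{\lambda z}$, so $C_\phi$ (and hence $C_\phi^*$) is similar to a diagonal operator with simple point spectrum, whence $\mathrm{Ker}(C_\phi^*-\overline{\lambda}^nI)=\mathbb{C}v_n$; the paper instead avoids any appeal to simplicity and shows directly, using the eigenvector orthogonality $\langle C\phi_\alpha^n,\phi_\alpha^m\rangle=0$ for $n\neq m$ together with $\langle v_n,\phi_\alpha^m\rangle=\langle z^n,z^m\rangle$, that $C\phi_\alpha^n-\|z^n\|^{-2}\overline{b_n}\,v_n$ is orthogonal to every $\phi_\alpha^m$ and hence vanishes by density. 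The endgames also differ: you finish exactly as in the proof of Theorem~\ref{thm6}, invoking Lemma~\ref{lem2} to get $\langle v_0,v_n\rangle=0$ for $n>2+\beta$ and hence $\alpha^n=0$; the paper instead bootstraps, showing that the \emph{same} conjugation $C$ would make $C_\psi$ complex symmetric for $\psi=\phi_\alpha\circ(\delta\phi_\alpha)$ with any $\delta\in\mathbb{D}\setminus\{0\}$, and then appeals to Theorem~\ref{thm6} for the contradiction. Your argument is more direct and self-contained, while the paper's reduction exposes a structural link between the automorphic and non-automorphic cases. Both routes ultimately rely on $\beta\in\mathbb{N}$ (yours via Lemma~\ref{lem2}, the paper's via Theorem~\ref{thm6}), which is the standing hypothesis of Section~\ref{5}.
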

\begin{proof} We get a contradiction by assuming that $C_{\phi}$ is complex symmetric. Let $C$ a conjugation on $A^2_{\beta}$ such that $CC_{\phi}C=C_{\phi}^*.$ Since $(\phi_{\alpha}^n)_{n\in \mathbb{N}}$ is a sequence of eigenvectors for $C_{\phi}$ corresponding to dinstict eigenvalues, we have $\left\langle C\phi_{\alpha}^n, \phi_{\alpha}^m\right\rangle =0$ if $n\neq m.$ Moreover, a simple computation shows that $\left\langle C\phi_{\alpha}^n,\phi_{\alpha}^n\right\rangle \neq 0$ for each $n.$ Putting $b_n:=\left\langle \phi_{\alpha}^n, C\phi_{\alpha}^n\right\rangle $ we obtain 
\begin{align}\label{thm5}	\left\langle C\phi_{\alpha}^n-\left\| z^n\right\| ^{-2}\overline{b_n}\;v_n, \phi_{\alpha}^m\right\rangle =&\left\langle C\phi_{\alpha}^n, \phi_{\alpha}^m\right\rangle - \frac{1}{\left\| z^n\right\| ^{2}}\overline{b_n}\;\left\langle C_{\phi_{\alpha}}^*z^n,\phi_{\alpha}^m\right\rangle \nonumber\\
=& \left\langle C\phi_{\alpha}^n, \phi_{\alpha}^m\right\rangle - \frac{1}{\left\| z^n\right\| ^{2}}\overline{b_n}\;\left\langle z^n,C_{\phi_{\alpha}}C_{\phi_{\alpha}}z^m\right\rangle \nonumber\\
=& \left\langle C\phi_{\alpha}^n, \phi_{\alpha}^m\right\rangle - \frac{1}{\left\| z^n\right\| ^{2}}\left\langle C\phi_{\alpha}^n, \phi_{\alpha}^m\right\rangle\left\| z^n\right\| ^{2}=0.
\end{align}	
By combining \eqref{thm5} and the a density of the span of $(\phi_{\alpha}^n)_{n\in \mathbb{N}}$ on $A^2_{\beta},$ we reach the following relation 
\begin{align*}
C\phi_{\alpha}^n=\frac{1}{\left\| z^n\right\| ^{2}}\overline{b_n}v_n
\end{align*} 	
for each non-negative integer $n.$ Now consider the map $\psi=\phi_{\alpha}\circ (\delta\phi_{\alpha})$ with $\delta\in \mathbb{D}\backslash \{0\},$ and observe that 
\begin{align}\label{thm7}
CC_{\psi}^*C\phi_{\alpha}^n=\frac{b_n\delta^n}{\left\| z^n\right\| ^{2}}Cv_n
=\frac{b_n\delta^n}{\left\| z^n\right\| ^{2}}\frac{\left\| z^n\right\| ^2}{b_n}\phi_{\alpha}^n=\delta^n\phi_{\alpha}^n=C_{\psi}\phi_{\alpha}^n.
\end{align}
for each non-negative integer $n.$ In particular, \eqref{thm7} forces $CC_{\psi}^*C=C_{\psi},$ that is $C_{\psi}$ is complex symmetric, however this contradicts Theorem \ref{thm6}.
	
\end{proof}

To treat the finite order elliptic automorphism case, we will use a lemma, which relates the iterates of the operator $M_z^*$ acting on the vectors $z^n.$
\begin{lem}\label{216} Let $M_{z}:A^2_{\beta}\longrightarrow A_{\beta}^{2}$ be multiplication by $z$, then for each non-negative integer $n$ and $m$ we have 
	$$
	\left(M_{z}^{*}\right)^{m}z^{n}=\left\{\begin{array}{rl}
	0 &\hspace{0.2cm} for \hspace{0.2cm} m>n\hspace{0.2cm} \\
	\\
	c_{m,n}z^{n-m}&\hspace{0.2cm} for\hspace{0.2cm} 0<m\leq n\\
	\\
	z^{n}&\hspace{0.2cm}for \hspace{0.2cm} m=0
	\end{array}\right.$$
	where $c_{m,n}=\displaystyle\frac{\Gamma(2+n-m+\beta)n!}{(n-m)!\Gamma(2+n+\beta)}.$ 
\end{lem}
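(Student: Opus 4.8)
The statement to prove is Lemma~\ref{216}, which computes $(M_z^*)^m z^n$ explicitly. My plan is to proceed by induction on $m$, using the single-step formula for $M_z^*$ established in Lemma~\ref{lem1}.

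\textbf{Base case and the role of Lemma~\ref{lem1}.} For $m=0$ there is nothing to prove. For $m=1$, I would apply Lemma~\ref{lem1} directly to $f=z^n$. Since $\widehat{z^n}(k)=\delta_{k,n}$, the formula \eqref{lem13} gives $(M_z^*z^n)(z)=\sum_{k}\frac{\Gamma(k+2+\beta)(k+1)}{\Gamma(k+3+\beta)}\delta_{k+1,n}z^k$, which is $0$ when $n=0$ (there is no $k\ge 0$ with $k+1=0$) and equals $\frac{\Gamma(n+1+\beta)n}{\Gamma(n+2+\beta)}z^{n-1}=c_{1,n}z^{n-1}$ when $n\ge 1$. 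This matches the claimed $c_{m,n}$ at $m=1$, since $c_{1,n}=\frac{\Gamma(1+n+\beta)n!}{(n-1)!\Gamma(2+n+\beta)}=\frac{\Gamma(1+n+\beta)\,n}{\Gamma(2+n+\beta)}$.

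\textbf{Inductive step.} Assume the formula holds for some $m$ with $0<m\le n$, so $(M_z^*)^m z^n=c_{m,n}z^{n-m}$. Then $(M_z^*)^{m+1}z^n=M_z^*\big(c_{m,n}z^{n-m}\big)=c_{m,n}\,M_z^*z^{n-m}$, and I apply the $m=1$ case to the monomial $z^{n-m}$. If $n-m\ge 1$, i.e. $m+1\le n$, this equals $c_{m,n}\cdot\frac{\Gamma(n-m+1+\beta)(n-m)}{\Gamma(n-m+2+\beta)}z^{n-m-1}$, and a routine cancellation of Gamma factors and factorials shows the coefficient equals $c_{m+1,n}=\frac{\Gamma(2+n-(m+1)+\beta)\,n!}{(n-(m+1))!\,\Gamma(2+n+\beta)}$. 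If $n-m=0$, i.e. $m=n$, then $M_z^*z^0=M_z^*1=0$ (the constant function is annihilated, again by the $m=1$ computation with $n=0$), so $(M_z^*)^{n+1}z^n=0$; and once we have hit $0$, all further applications of $M_z^*$ give $0$, covering all $m>n$. The case $m>n$ when we started with $m>n$ directly is subsumed: iterating down, we necessarily pass through the exponent $0$ and then through $M_z^*1=0$.

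\textbf{Main obstacle.} There is no conceptual difficulty here; the only thing to be careful about is the bookkeeping of the Gamma-function and factorial quotients in the inductive step, namely verifying the identity
\begin{align*}
\frac{\Gamma(2+n-m+\beta)\,n!}{(n-m)!\,\Gamma(2+n+\beta)}\cdot\frac{\Gamma(n-m+1+\beta)(n-m)}{\Gamma(n-m+2+\beta)}=\frac{\Gamma(2+n-(m+1)+\beta)\,n!}{(n-(m+1))!\,\Gamma(2+n+\beta)},
\end{align*}
which reduces, after using $\Gamma(n-m+2+\beta)=(n-m+1+\beta)\Gamma(n-m+1+\beta)$ is \emph{not} needed and instead $\Gamma(2+n-m+\beta)$ appears on both sides appropriately, to the elementary cancellation $(n-m)!/(n-m)=(n-m-1)!$. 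Keeping the indices straight is the entire content of the proof, so I would simply carry out this cancellation cleanly and note that the three-way case split on $m<n$, $m=n$, $m>n$ is closed under the one-step recursion.
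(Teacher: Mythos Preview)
Your proof is correct. The induction based on Lemma~\ref{lem1} works exactly as you describe, and the coefficient check in the inductive step reduces (after cancelling $\Gamma(2+n-m+\beta)$ against $\Gamma(n-m+2+\beta)$ and using $(n-m)!/(n-m)=(n-m-1)!$) to the claimed $c_{m+1,n}$.

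The paper takes a different, slightly more direct route: rather than inducting on $m$, it uses $(M_z^*)^m=(M_z^m)^*=M_{z^m}^*$ in one step and then identifies $(M_z^*)^m z^n$ by testing it against an arbitrary $f\in A^2_\beta$, computing $\langle (M_z^*)^m z^n,f\rangle=\langle z^n,z^m f\rangle$ from the inner-product formula \eqref{eq1}. This avoids induction and Lemma~\ref{lem1} entirely and reads off the constant $c_{m,n}$ from a single ratio of norms. Your approach has the merit of making explicit use of the one-step formula already established in Lemma~\ref{lem1}; the paper's approach is shorter and treats all three cases ($m=0$, $0<m\le n$, $m>n$) in a single inner-product calculation. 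Both arguments are equally elementary.
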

\begin{proof} We first fixe $m.$ Following, we note that 
\begin{align}\label{v50}
\left\langle (M_z^{*})^mz^n,f\right\rangle= \left\langle M_{z^m}^{*}z^n,f\right\rangle
= \left\langle z^n,M_{z^m}f\right\rangle
=\left\langle z^n, z^mf\right\rangle
\end{align}
for each non-negative integer $n$ and $f\in A^2_{\beta}.$ From \eqref{v50}, we see that the result is immediate if $m=0.$ So, we assume $m>0.$ Additionally, if $m>n$ the equality \eqref{v50} implies that $z^mf$ is orthogonal to $z^n,$ and therefore $(M_z^*)^mz^n=0.$ If $m\leq n,$ a simple computation using the $A_{\beta}^2$ inner product provides
\begin{align}
\left\langle (M_z^{*})^mz^n,f\right\rangle=& \displaystyle \frac{n!\Gamma(2+\beta)}{\Gamma(2+n+\beta)}\widehat{f}(n-m)\nonumber\\
=&\displaystyle \frac{(n-m)!\Gamma(2+\beta)}{\Gamma(2+n-m+\beta)}\frac{\Gamma(2+n-m+\beta)}{(n-m)!\Gamma(2+\beta)}\frac{n!\Gamma(2+\beta)}{\Gamma(2+n+\beta)}\widehat{f}(n-m)\nonumber\\
=& \displaystyle \left\langle \frac{\Gamma(2+n-m+\beta)}{(n-m)!\Gamma(2+\beta)}\frac{n!\Gamma(2+\beta)}{\Gamma(2+n+\beta)}z^{n-m},f\right\rangle\nonumber\\
=&\displaystyle \left\langle \frac{\Gamma(2+n-m+\beta)n!}{(n-m)!\Gamma(2+n+\beta)}z^{n-m},f\right\rangle\nonumber.
\end{align}
Therefore, $\displaystyle (M_z^{*})^mz^n=\frac{\Gamma(2+n-m+\beta)n!}{(n-m)!\Gamma(2+n+\beta)}z^{n-m}.$
\end{proof}

The next lemma generalizes \cite[Lemma 2.2.]{wal} and \cite[Lemma 5.]{Ted}.
\begin{lem}
	Let $\beta\in \mathbb{N}.$ If $\alpha \in \mathbb{D}\backslash\{0\} $ and $v_n:=C_{\phi_{\alpha}}^*z^n$ for each non-negative integer $n$ then $v_n$ is orthogonal to $v_m$ whenever $\left| n-m\right| \geq \beta+3.$
\end{lem}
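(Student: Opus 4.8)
The plan is to compute $\langle v_n, v_m\rangle$ explicitly using the formula \eqref{658} for $C_{\phi_{\alpha}}^*$ together with the mapping properties of the iterates of $M_z^*$ from Lemma \ref{216}, and then show that when $|n-m|\geq \beta+3$ all the surviving terms vanish by orthogonality. Recall from \eqref{658} that
\begin{align*}
v_n = C_{\phi_{\alpha}}^* z^n = \sum_{k=0}^{2+\beta}\binom{2+\beta}{k}(-\alpha)^k M_{K_\alpha}C_{\phi_\alpha}(M_z^*)^k z^n.
\end{align*}
By Lemma \ref{216}, $(M_z^*)^k z^n = c_{k,n} z^{n-k}$ for $0\le k\le n$ and vanishes otherwise, so $C_{\phi_\alpha}(M_z^*)^k z^n$ is a scalar multiple of $C_{\phi_\alpha}z^{n-k} = \phi_\alpha^{\,n-k}$, and hence $v_n$ lies in the span of $\{M_{K_\alpha}\phi_\alpha^{\,j} : n-(2+\beta)\le j\le n\}$. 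Symmetrically $v_m$ lies in the span of $\{M_{K_\alpha}\phi_\alpha^{\,i} : m-(2+\beta)\le i\le m\}$. Since $\langle M_{K_\alpha}f, M_{K_\alpha}g\rangle = \langle M_{|K_\alpha|^2}f,g\rangle$ is not obviously orthogonality-friendly, the cleaner route is to instead write $\langle v_n,v_m\rangle = \langle C_{\phi_\alpha}^* z^n, C_{\phi_\alpha}^* z^m\rangle = \langle z^n, C_{\phi_\alpha}C_{\phi_\alpha}^* z^m\rangle$; but $C_{\phi_\alpha}C_{\phi_\alpha}^*$ is not the identity, so this must be handled carefully. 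The most robust approach: expand $\langle v_n, v_m\rangle$ via \eqref{658} applied to $z^m$ and the adjoint identity, reducing it to a double sum
\begin{align*}
\langle v_n,v_m\rangle = \sum_{k=0}^{2+\beta}\sum_{l=0}^{2+\beta}\binom{2+\beta}{k}\binom{2+\beta}{l}(-\overline\alpha)^k(-\alpha)^l\, c_{k,n}c_{l,m}\,\big\langle \phi_\alpha^{\,n-k}, M_{|K_\alpha|^2}\phi_\alpha^{\,m-l}\big\rangle,
\end{align*}
and then observe that $M_{|K_\alpha|^2}$ (or more precisely $M_{1/K_\alpha}^* M_{1/K_\alpha}$ unwound via \eqref{pre3}) only shifts indices within a band of width $2+\beta$, while $C_{\phi_\alpha}$ is unitary-like on the monomial basis up to the known scalars.

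Concretely, the cleanest organizing identity is: since $v_j = C_{\phi_\alpha}^* z^j$ and $\langle v_j, v_i\rangle$ was already shown in Lemma \ref{lem2} to vanish for $i=0$, $j>2+\beta$, I would mimic that computation in general. Write $\langle v_n, v_m\rangle = \langle C_{\phi_\alpha}^* z^n, C_{\phi_\alpha}^* z^m\rangle$ and use $C_{\phi_\alpha}^* = M_{K_\alpha}C_{\phi_\alpha}M_{1/K_\alpha}^*$ to get
\begin{align*}
\langle v_n,v_m\rangle = \big\langle M_{K_\alpha}C_{\phi_\alpha}M_{1/K_\alpha}^* z^n,\ M_{K_\alpha}C_{\phi_\alpha}M_{1/K_\alpha}^* z^m\big\rangle.
\end{align*}
Now $M_{1/K_\alpha}^* z^n$ is, by \eqref{pre3} and Lemma \ref{216}, a linear combination of $z^{n}, z^{n-1},\dots, z^{n-(2+\beta)}$; applying the unitary-up-to-scalar $C_{\phi_\alpha}$ and then $M_{K_\alpha}$ keeps things inside $[\phi_\alpha\text{-images of monomials of degree } \ge n-(2+\beta)]$. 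The key structural fact I want is that $M_{K_\alpha}^* M_{K_\alpha} = M_{|K_\alpha|^2}$ raises degree by at most... — no, $|K_\alpha|^2$ is not a polynomial, so I should instead pair differently: move one $M_{K_\alpha}$ across as $M_{K_\alpha}^* = M_{\overline{K_\alpha}}$? That's not analytic. The correct move is the one already used in Lemma \ref{lem2}: pair $v_n$ against $z^m$-type objects. So I would write $\langle v_n, v_m \rangle = \langle C_{\phi_\alpha}^* z^n, C_{\phi_\alpha}^* z^m\rangle$ and push the second adjoint back: $= \langle C_{\phi_\alpha}C_{\phi_\alpha}^* z^n, z^m\rangle$. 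Using $C_{\phi_\alpha}C_{\phi_\alpha}^* = M_{K_\alpha\circ\phi_\alpha}M_{1/K_\alpha}^*$ followed by re-expanding, one reduces to showing a degree-band estimate: $C_{\phi_\alpha}C_{\phi_\alpha}^* z^n$ is a (possibly infinite) series whose interaction with $z^m$ is governed by how $M_{1/K_\alpha}^*$ shifts indices down by at most $2+\beta$ and $C_{\phi_\alpha}$ and $M_{K_\alpha\circ\phi_\alpha} = M_{1/K_\alpha}\cdot(\text{const})$ shifts back up by at most $2+\beta$. Hence $\langle C_{\phi_\alpha}C_{\phi_\alpha}^* z^n, z^m\rangle = 0$ unless $|n-m|\le 2+\beta$. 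Wait — that gives the bound $|n-m|\le 2+\beta$, i.e. $|n-m|\le\beta+2$, so vanishing for $|n-m|\ge\beta+3$, exactly as claimed.

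So the key steps, in order, are: (1) start from $\langle v_n,v_m\rangle = \langle C_{\phi_\alpha}C_{\phi_\alpha}^*z^n,\ z^m\rangle$; (2) write $C_{\phi_\alpha}C_{\phi_\alpha}^* = C_{\phi_\alpha}M_{K_\alpha}C_{\phi_\alpha}M_{1/K_\alpha}^*$ and simplify $C_{\phi_\alpha}M_{K_\alpha}C_{\phi_\alpha} = M_{K_\alpha\circ\phi_\alpha}C_{\phi_\alpha}^2\cdot(\dots)$ — actually $C_{\phi_\alpha}M_\psi C_{\phi_\alpha} = M_{\psi\circ\phi_\alpha}$ since $C_{\phi_\alpha}$ is an involution, giving $C_{\phi_\alpha}C_{\phi_\alpha}^* = M_{K_\alpha\circ\phi_\alpha}M_{1/K_\alpha}^*$; (3) use \eqref{q51}-type computation to see $K_\alpha\circ\phi_\alpha = (1-|\alpha|^2)^{-(2+\beta)}/K_\alpha$, so $C_{\phi_\alpha}C_{\phi_\alpha}^* = (1-|\alpha|^2)^{-(2+\beta)} M_{1/K_\alpha}M_{1/K_\alpha}^*$; (4) expand $M_{1/K_\alpha}$ and $M_{1/K_\alpha}^*$ via \eqref{pre3} and Lemma \ref{216} to conclude $C_{\phi_\alpha}C_{\phi_\alpha}^* z^n$ is a linear combination of $z^{n-(2+\beta)},\dots,z^{n+(2+\beta)}$ (with monomials of negative degree absent); (5) conclude $\langle v_n,v_m\rangle=0$ when $|n-m|>2+\beta$. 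The main obstacle I anticipate is Step (3)-(4) bookkeeping: one must verify that $M_{1/K_\alpha}M_{1/K_\alpha}^*$ genuinely has banded structure of bandwidth $2+\beta$ on each side — the upward shift from $M_{1/K_\alpha}$ is clear (multiplication by a degree-$(2+\beta)$ polynomial), but the downward shift from $M_{1/K_\alpha}^*$ requires Lemma \ref{216} and care that no lower-order tail survives; composing the two, the net displacement of $z^n$ ranges over $[n-(2+\beta), n+(2+\beta)]$, which pins the orthogonality threshold at exactly $\beta+3$.
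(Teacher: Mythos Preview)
Your final plan (steps (1)--(5)) is correct and matches the paper's argument: both compute $\langle v_n,v_m\rangle=\langle C_{\phi_\alpha}C_{\phi_\alpha}^* z^n,z^m\rangle$ and show that $C_{\phi_\alpha}C_{\phi_\alpha}^* z^n$ is a linear combination of $z^{n-(2+\beta)},\dots,z^{n+(2+\beta)}$, using the polynomial expansion of $1/K_\alpha$ and Lemma~\ref{216}. Your packaging via the operator identity $C_{\phi_\alpha}C_{\phi_\alpha}^*=(1-|\alpha|^2)^{-(2+\beta)}M_{1/K_\alpha}M_{1/K_\alpha}^*$ (obtained from $C_{\phi_\alpha}M_\psi C_{\phi_\alpha}=M_{\psi\circ\phi_\alpha}$ and the computation $K_\alpha\circ\phi_\alpha=(1-|\alpha|^2)^{-(2+\beta)}/K_\alpha$) is a slightly cleaner way to see the banded structure than the paper's explicit term-by-term expansion, but it is the same computation underneath.
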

\begin{proof} Recall that in \eqref{658} we computated an expression for the adjoint of $C_{\phi_{\alpha}}$ 
\begin{align}\label{q61}
C_{\phi_{\alpha}}^*=   \sum_{k=0}^{2+\beta}{2+\beta\choose k} (-\alpha)^k M_{K_{\alpha}}C_{\phi_{\alpha}}(M_z^*)^k.
\end{align}
We use \eqref{q61} to compute the action of $C_{\phi_{\alpha}}^*$ on $z^n.$ In view of Lemma \ref{216}, we have two cases to consider, namely: $n>2+\beta$ and $ 2+\beta\geq n.$ To simplify the study of these cases, we use the notation
 $r_k={2+\beta\choose k} (-\alpha)^k$ for each $k\in \left\lbrace 0,1,\ldots,2+\beta\right\rbrace. $ If $n>\beta+2$, we obtain
\begin{align}\label{thm8}
C_{\phi_{\alpha}}^*z^n= \displaystyle \sum_{k=0}^{2+\beta}r_kM_{K_{\alpha}}C_{\phi_{\alpha}}\left(c_{k,n}z^{n-k}\right)=\displaystyle \sum_{k=0}^{2+\beta}r_kc_{k,n} K_{\alpha}(z)\left[\phi_{\alpha}(z)\right]^{n-k}.
\end{align}
For the second case, that is $2+\beta\geq n,$ we have
\begin{align}\label{thm9}
C_{\phi_{\alpha}}^*z^n= \displaystyle \sum_{k=0}^{n}r_kM_{K_{\alpha}}C_{\phi_{\alpha}}\left(c_{k,n}z^{n-k}\right)=\displaystyle \sum_{k=0}^{2+\beta}r_kc_{k,n} K_{\alpha}(z)\left[\phi_{\alpha}(z)\right]^{n-k},
\end{align}
where $c_{k,n}=0$ if $k=n+1, \ldots,2+\beta.$ So, a simple computation from \eqref{thm8} and \eqref{thm9} provides

\begin{align}\label{thm10}
C_{\phi_{\alpha}}C_{\phi_{\alpha}}^*z^n=\displaystyle \sum_{k=0}^{2+\beta}r_kc_{k,n} C_{\phi_{\alpha}}K_{\alpha}\phi_{\alpha}^{n-k}(z)=\displaystyle \sum_{k=0}^{2+\beta}r_kc_{k,n}(C_{\phi_{\alpha}}K_{\alpha})(z)z^{n-k}
\end{align}
To obtain the power series of $C_{\phi_{\alpha}}C_{\phi_{\alpha}}^*z^n$ we apply \eqref{q51} in \eqref{thm10} and we again use the Newton binomial formula, getting  
\begin{align}
C_{\phi_{\alpha}}C_{\phi_{\alpha}}^*z^n=&\displaystyle \sum_{k=0}^{2+\beta} \frac{r_kc_{k,n}}{(1-\left|\alpha\right|^2)^{2+\beta}}(1-\overline{\alpha}z)^{2+\beta}z^{n-k}\nonumber\\
=& \displaystyle \sum_{k=0}^{2+\beta} \frac{r_kc_{k,n}}{(1-\left|\alpha\right|^2)^{2+\beta}}\left[\sum_{j=0}^{2+\beta}{2+\beta\choose j}(-\overline{\alpha}z)^j\right]z^{n-k}\nonumber\\
=&\displaystyle \sum_{k=0}^{2+\beta}\sum_{j=0}^{2+\beta}\frac{\overline{r_j}r_kc_{k,n}}{(1-\left|\alpha\right|^2)^{2+\beta}}z^{n-k+j}.\nonumber
\end{align}
By using this last equality we compute the inner product between $v_n$ and $v_m,$ as follows
\begin{equation}\label{thm11}
\left\langle v_n,v_m\right\rangle=\left\langle z^n,C_{\phi_{\alpha}}C_{\phi_{\alpha}}^*z^m\right\rangle =    \sum_{k=0}^{2+\beta}\sum_{j=0}^{2+\beta}\frac{r_j\overline{r_k}\;\overline{c_{k,m}}}{(1-\left|\alpha\right|^2)^{2+\beta}}    \left\langle z^n,z^{m-k+j}\right\rangle.
\end{equation}
From \eqref{thm11} we see that the orthoganality between $v_n$ and $v_m$ is closely linked to indices $j,k,m$ and $n,$ more precisely 
\begin{center}$v_n\perp v_m \Longleftrightarrow \left| m-k+j- n\right| \neq 0.$
\end{center} Since $0\leq j,k\leq 2+\beta,$ the inequality $-\left|j-k\right|\geq -(2+\beta)$ holds. Moreover, by assuming $\left|n-m\right|\geq 3+\beta$ we get
\begin{equation*}\label{218}
\left|m-k+j-n\right|\geq \left|n-m\right|-\left|j-k\right|\geq 3+\beta-(2+\beta)=1.
\end{equation*}
Therefore, $v_n\perp v_m.$ 
\end{proof}
Although the statement above  is an analogue of \cite[Lemma 2.2]{wal}, our proof is very different.

\begin{lem}\label{234} Let $\beta \in \mathbb{N} $ and $\alpha\in \mathbb{D}\backslash\{0\}.$ Suppose also that $\phi=\phi_{\alpha}\circ (\lambda \phi_{\alpha})$ is an elliptic automorphism of finite order $N$ and define $V_n=Ker(C_{\phi}^{*}-\overline{\lambda}^nI)$ for $n=0,1,\ldots,N-1.$ If $N\geq 2(3+\beta)$ then $V_0\perp V_{3+\beta}.$ 
\end{lem}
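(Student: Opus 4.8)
The plan is to first describe the eigenspaces $V_{n}$ explicitly, and then deduce $V_{0}\perp V_{3+\beta}$ from the orthogonality relation between the vectors $v_{k}$ established in the preceding lemma, together with a short counting argument that consumes the hypothesis $N\geq 2(3+\beta)$.

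I would begin by writing $R_{\lambda}(z)=\lambda z$, so that the hypothesis becomes $\phi=\phi_{\alpha}\circ R_{\lambda}\circ\phi_{\alpha}$, hence $C_{\phi}=C_{\phi_{\alpha}}C_{R_{\lambda}}C_{\phi_{\alpha}}$ and, after taking adjoints, $C_{\phi}^{*}=C_{\phi_{\alpha}}^{*}C_{R_{\lambda}}^{*}C_{\phi_{\alpha}}^{*}$. Since $C_{\phi_{\alpha}}^{2}=I$, the operator $C_{\phi_{\alpha}}^{*}$ is a bounded linear homeomorphism with $(C_{\phi_{\alpha}}^{*})^{-1}=C_{\phi_{\alpha}}^{*}$, so $C_{\phi}^{*}$ is conjugate to $C_{R_{\lambda}}^{*}$ and $V_{n}=\ker(C_{\phi}^{*}-\overline{\lambda}^{n}I)=C_{\phi_{\alpha}}^{*}\ker(C_{R_{\lambda}}^{*}-\overline{\lambda}^{n}I)$. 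Next I would note that $C_{R_{\lambda}}$ acts diagonally on the orthogonal basis $(z^{k})_{k\in\mathbb{N}}$ of $A^{2}_{\beta}$ via $C_{R_{\lambda}}z^{k}=\lambda^{k}z^{k}$, hence $C_{R_{\lambda}}^{*}z^{k}=\overline{\lambda}^{k}z^{k}$; as $\phi$ has order exactly $N$, $\lambda$ is a primitive $N$-th root of unity, so $\overline{\lambda}^{k}=\overline{\lambda}^{n}$ exactly when $k\equiv n\ (\mathrm{mod}\ N)$. Therefore $\ker(C_{R_{\lambda}}^{*}-\overline{\lambda}^{n}I)=\overline{\mathrm{span}}\{z^{k}:k\equiv n\ (\mathrm{mod}\ N)\}$, and applying the homeomorphism $C_{\phi_{\alpha}}^{*}$ (and recalling $v_{k}=C_{\phi_{\alpha}}^{*}z^{k}$) gives
\[
V_{n}=\overline{\mathrm{span}}\{\,v_{k}:k\equiv n\ (\mathrm{mod}\ N)\,\}\qquad(n=0,1,\dots,N-1).
\]

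With this in hand, by continuity of the inner product it would suffice to show that $v_{i}\perp v_{j}$ whenever $i\equiv 0$ and $j\equiv 3+\beta\ (\mathrm{mod}\ N)$. By the preceding lemma this holds as soon as $|i-j|\geq 3+\beta$, so I would finish with the arithmetic. Write $i=aN$ and $j=bN+(3+\beta)$ with $a,b\in\mathbb{N}$; this is legitimate since $0<3+\beta<N$, the second inequality because $N\geq 2(3+\beta)$. If $a=b$ then $|i-j|=3+\beta$; if $a>b$ then $i-j=(a-b)N-(3+\beta)\geq N-(3+\beta)\geq 3+\beta$; and if $b>a$ then $j-i=(b-a)N+(3+\beta)\geq N+(3+\beta)>3+\beta$. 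In every case $|i-j|\geq 3+\beta$, so $v_{i}\perp v_{j}$ and hence $V_{0}\perp V_{3+\beta}$.

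The step I expect to need the most care is the identification of $V_{n}$ with $\overline{\mathrm{span}}\{v_{k}:k\equiv n\ (\mathrm{mod}\ N)\}$, which is what turns an abstract eigenspace into something concrete enough to feed into the lemma; once $C_{\phi}^{*}$ has been conjugated to the diagonal (indeed unitary) operator $C_{R_{\lambda}}^{*}$ it is routine. Strictly speaking only the inclusion $V_{n}\subseteq\overline{\mathrm{span}}\{v_{k}:k\equiv n\ (\mathrm{mod}\ N)\}$ is needed: if $C_{\phi}^{*}f=\overline{\lambda}^{n}f$, then applying $C_{\phi_{\alpha}}^{*}$ to $C_{\phi_{\alpha}}^{*}C_{R_{\lambda}}^{*}C_{\phi_{\alpha}}^{*}f=\overline{\lambda}^{n}f$ yields $C_{R_{\lambda}}^{*}(C_{\phi_{\alpha}}^{*}f)=\overline{\lambda}^{n}(C_{\phi_{\alpha}}^{*}f)$, whence $C_{\phi_{\alpha}}^{*}f\in\overline{\mathrm{span}}\{z^{k}:k\equiv n\ (\mathrm{mod}\ N)\}$ and $f=C_{\phi_{\alpha}}^{*}(C_{\phi_{\alpha}}^{*}f)\in\overline{\mathrm{span}}\{v_{k}:k\equiv n\ (\mathrm{mod}\ N)\}$. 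After that, everything is the preceding lemma and the elementary count, which is exactly where $N\geq 2(3+\beta)$ is used.
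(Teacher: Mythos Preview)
Your proof is correct and follows essentially the same approach as the paper: both identify $V_{n}$ with $\overline{\mathrm{span}}\{v_{k}:k\equiv n\ (\mathrm{mod}\ N)\}$ by conjugating $C_{\phi}^{*}$ to the diagonal rotation operator via $C_{\phi_{\alpha}}^{*}$, and then invoke the preceding lemma together with the arithmetic estimate $|i-j|\geq 3+\beta$. Your case analysis for the counting step is in fact slightly more careful than the paper's, which treats the inequality $|N(k-j)|-(3+\beta)\geq 2(3+\beta)-(3+\beta)$ in one line without separating the case $k=j$.
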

\begin{proof} For each non-negative integer $n$ we put $v_n:=C_{\phi_{\alpha}}^*z^n.$ We show that $V_n=\overline{span}(v_{kN+n})_{k\in \mathbb{N}}.$ Since $C_{\phi}^*v_{kN+n}=\overline{\lambda}^{n}v_{kN+n}$ for each non-negative integer $k,$ the inclusion $\overline{span}(v_{kN+n})_{k\in \mathbb{N}}\subset V_{n}$ is immediate. On the other hand, $C_{\phi_{\alpha}}^*f\in Ker(C_{\overline{\lambda}z}-\overline{\lambda}^nI)$ for each $f\in V_n,$ since $C_{\phi_{\alpha}}^*(C_{\overline{\lambda} z}-\overline{\lambda}^nI)C_{\phi_{\alpha}}^*=C_{\phi}^*-\overline{\lambda}^nI.$ Since
	\begin{align*}
	Ker(C_{\overline{\lambda}z}-\overline{\lambda}^nI)=\overline{span}(z^{kN+n})_{k\in \mathbb{N}}
	\end{align*}
	for $n=0,1,\ldots, N-1$ we have $C_{\phi_{\alpha}}^*f\in \overline{span}(z^{kN+n})_{k\in \mathbb{N}}$ or equivalently $f\in \overline{span}(v_{kN+n})_{k\in \mathbb{N}}.$ This forces $V_n=\overline{span}(v_{kN+n})_{n\in \mathbb{N}}.$ Then given $v_{kN}\in V_{0}$ and $v_{jN+(3+\beta)}\in V_{3+\beta}$ for each non-negative integer $k$ and $j$ we have
\begin{center}
	$\left|kN-\left[jN+(3+\beta)\right]\right|\geq \left|N(k-j)\right|-(3+\beta)\geq 2(3+\beta)-(3+\beta)=3+\beta.$
\end{center}
By Lemma \ref{216} we conclude that $v_{kN}\perp v_{jN+(3+\beta)},$ or more precisely $V_0\perp V_{3+\beta}.$
\end{proof}
The next result is analogous to \cite[Proposition 3.3]{wal} and it generalizes to all $A_{\beta}^2$ for $\beta\in \mathbb{N}.$

\begin{thm}\label{thm12}Let $\beta\in \mathbb{N} $ and $\alpha\in \mathbb{D}\backslash\{0\}.$ Suppose also that $\phi$ is an elliptic automorphism of finite order $N\geq 2(3+\beta)$ and not a rotation then, $C_{\phi}$ is not complex symmetric on $A_{\beta}^2.$
	
\end{thm}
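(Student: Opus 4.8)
The plan is to argue by contradiction, assuming $C_{\phi}$ is complex symmetric, and to derive a contradiction from the orthogonality already established in Lemma \ref{234}. Write $\phi=\phi_{\alpha}\circ(\lambda\phi_{\alpha})$ with $\lambda\in\mathbb{T}$ of order exactly $N\geq 2(3+\beta)$; since $\phi$ is not a rotation we have $\alpha\neq 0$. As in the proofs of Theorems \ref{thm6} and \ref{thm13}, the eigenvectors of $C_{\phi}$ are the powers $\phi_{\alpha}^n$, with $C_{\phi}\phi_{\alpha}^n=\lambda^n\phi_{\alpha}^n$, and these span $A^2_{\beta}$. On the adjoint side, $v_n:=C_{\phi_{\alpha}}^*z^n$ satisfies $C_{\phi}^*v_n=\overline{\lambda}^n v_n$, so $v_n\in V_{n\bmod N}$.

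The key step is to identify $C\phi_{\alpha}^n$. Let $C$ be a conjugation with $CC_{\phi}C=C_{\phi}^*$. From the spectral symmetry \eqref{37}, $C_{\phi}^*C\phi_{\alpha}^n=\overline{\lambda}^n C\phi_{\alpha}^n$, so $C\phi_{\alpha}^n\in V_{n\bmod N}$. I would then run the same computation as in \eqref{thm5}: using $\langle C\phi_{\alpha}^n,\phi_{\alpha}^m\rangle=0$ for $n\not\equiv m$ (distinct eigenvalues of a $C$-symmetric operator) together with $\langle C\phi_{\alpha}^n,\phi_{\alpha}^n\rangle\neq 0$, and the fact that $\langle v_n,\phi_{\alpha}^m\rangle=\langle z^n,C_{\phi_\alpha}C_{\phi_\alpha}z^m\rangle=\langle z^n,z^m\rangle=\|z^n\|^2\delta_{nm}$, one checks that $C\phi_{\alpha}^n-\|z^n\|^{-2}\overline{b_n}\,v_n$ is orthogonal to every $\phi_{\alpha}^m$, where $b_n:=\langle\phi_{\alpha}^n,C\phi_{\alpha}^n\rangle\neq 0$. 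Density of the span of $(\phi_{\alpha}^m)_m$ then forces
\begin{align*}
C\phi_{\alpha}^n=\frac{\overline{b_n}}{\|z^n\|^2}\,v_n\qquad\text{for every }n\geq 0.
\end{align*}

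Now I would exploit this with a well-chosen pair of indices. Take $n=0$ and $n=3+\beta$; since $N\geq 2(3+\beta)$ these lie in distinct residue classes mod $N$ (indeed $0<3+\beta<N$), so $\phi_{\alpha}^0=1$ and $\phi_{\alpha}^{3+\beta}$ are eigenvectors of $C_{\phi}$ for distinct eigenvalues, whence $\langle C\phi_{\alpha}^0,\phi_{\alpha}^{3+\beta}\rangle=0$. On the other hand, using $C$ conjugate-linear with $\langle Cf,Cg\rangle=\langle g,f\rangle$ and the displayed formula,
\begin{align*}
0=\langle \phi_{\alpha}^{3+\beta},K_{\alpha_0}\rangle\ \text{type identity}\quad\Longrightarrow\quad \langle C K_0,C\phi_{\alpha}^{3+\beta}\rangle=\text{const}\cdot\langle v_0,v_{3+\beta}\rangle .
\end{align*}
More cleanly: evaluating $\phi_\alpha^{3+\beta}$ at $0$ gives $\alpha^{3+\beta}=[\phi_\alpha(0)]^{3+\beta}=\langle\phi_\alpha^{3+\beta},K_0\rangle=\langle CK_0,C\phi_\alpha^{3+\beta}\rangle=r_0\overline{r_{3+\beta}}\langle v_0,v_{3+\beta}\rangle$ for suitable nonzero scalars $r_0,r_{3+\beta}$, exactly as in the proof of Theorem \ref{thm6}. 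But $\langle v_0,v_{3+\beta}\rangle=0$ by the lemma preceding Lemma \ref{234} (the case $|n-m|\geq\beta+3$ of orthogonality of the $v_n$), so $\alpha^{3+\beta}=0$, forcing $\alpha=0$ — contradicting $\alpha\neq 0$. Hence $C_{\phi}$ is not complex symmetric.

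The main obstacle is making the identification $C\phi_{\alpha}^n=\|z^n\|^{-2}\overline{b_n}v_n$ airtight: one must verify $b_n\neq 0$ for all $n$ (the "simple computation" asserted in Theorem \ref{thm13}, which amounts to $\langle C\phi_\alpha^n,\phi_\alpha^n\rangle\neq 0$, a consequence of $C$ being injective and $\phi_\alpha^n$ eigenvectors), and that the vectors $\phi_\alpha^m$ indeed have dense span so that vanishing of all inner products yields the equality. Everything else is bookkeeping with the $A^2_\beta$ inner product and the already-proved orthogonality of the $v_n$'s; note we only need the single pair $(0,3+\beta)$, so the full strength of Lemma \ref{234} (all of $V_0\perp V_{3+\beta}$) is convenient but the contradiction can be extracted from one coordinate.
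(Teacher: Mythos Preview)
Your argument has a genuine gap: the identification $C\phi_{\alpha}^n=\|z^n\|^{-2}\overline{b_n}\,v_n$ does \emph{not} carry over from the infinite-order case. The computation \eqref{thm5} works there because the eigenvalues $\lambda^n$ are pairwise distinct, so $\langle C\phi_{\alpha}^n,\phi_{\alpha}^m\rangle=0$ for \emph{every} $m\neq n$. In the finite-order case you only get this for $m\not\equiv n\pmod N$; when $m\equiv n\pmod N$ with $m\neq n$ you still have $\langle v_n,\phi_{\alpha}^m\rangle=\langle z^n,z^m\rangle=0$, but there is no reason for $\langle C\phi_{\alpha}^n,\phi_{\alpha}^m\rangle$ to vanish, since $\phi_{\alpha}^n$ and $\phi_{\alpha}^m$ lie in the same eigenspace of $C_{\phi}$. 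Hence $C\phi_{\alpha}^n-\|z^n\|^{-2}\overline{b_n}v_n$ is \emph{not} orthogonal to all $\phi_{\alpha}^m$, and the conclusion fails. Concretely, $CK_0=C\phi_{\alpha}^0$ lands in $V_0=\overline{\mathrm{span}}\,(v_{kN})_{k\geq 0}$, which is infinite-dimensional, so it is not a scalar multiple of $v_0$; likewise $C\phi_{\alpha}^{3+\beta}$ need not be a multiple of $v_{3+\beta}$. Your final line $\langle CK_0,C\phi_{\alpha}^{3+\beta}\rangle=r_0\overline{r_{3+\beta}}\langle v_0,v_{3+\beta}\rangle$ is therefore unjustified.

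The paper's proof bypasses this entirely by working at the level of eigenspaces rather than individual eigenvectors. From Lemma \ref{234} one has $V_0\perp V_{3+\beta}$ as \emph{subspaces}. Since $C$ is a (conjugate-linear) isometry mapping $V_n=Ker(C_{\phi}^*-\overline{\lambda}^nI)$ onto $Ker(C_{\phi}-\lambda^nI)$, orthogonality is preserved and one obtains $Ker(C_{\phi}-I)\perp Ker(C_{\phi}-\lambda^{3+\beta}I)$. Now $K_0=1\in Ker(C_{\phi}-I)$ and $\phi_{\alpha}^{3+\beta}\in Ker(C_{\phi}-\lambda^{3+\beta}I)$, so $\alpha^{3+\beta}=\langle\phi_{\alpha}^{3+\beta},K_0\rangle=0$, the desired contradiction. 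Note your closing remark is exactly backwards: the full strength of Lemma \ref{234} (orthogonality of the whole subspaces $V_0$ and $V_{3+\beta}$) is precisely what is needed here, because one cannot reduce $CK_0$ and $C\phi_{\alpha}^{3+\beta}$ to single coordinates $v_0$ and $v_{3+\beta}$.
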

\begin{proof} First we observe that Lemma \ref{234} guarantees that $V_0\perp V_{3+\beta},$ because $N\geq 2(3+\beta).$ We get a contradiction by assuming that  $C_{\phi}$ is complex symmetric. Let $C$ be a conjugation on $A^2_{\beta}$ such that $C_{\phi}C=CC_{\phi}^*.$ Then $C$ maps $V_0$ and $V_{3+\beta}$ onto $Ker(C_{\phi}-I)$ and $Ker(C_{\phi}-\lambda^{3+\beta}I)$ respectively. Since $C$ preserves orthogonality, we have
\begin{equation}\label{235}
Ker(C_{\phi}-I)\perp Ker(C_{\phi}-\lambda^{3+\beta}I)
\end{equation}
and in particular  \eqref{235} implies that $K_0$ is orthogonal to $\phi_{\alpha}^{3+\beta},$ hence
\begin{align*}{\alpha}^{3+\beta}=\left[\phi_{\alpha}(0)\right]^{3+\beta}=\left\langle \phi_{\alpha}^{3+\beta},K_0\right\rangle=0
\end{align*}
This last equality forces $\alpha=0,$ and contradicts the hypothesis $\alpha\neq 0.$
\end{proof}

Due to Theorem \ref{thm12} it follows that the order $3,4,\ldots, 5+2\beta$ elliptic cases remain open, more precisely:
\\
\\
\textbf{Problem :} Let $\beta\in \mathbb{N}$ and $\phi$ an elliptic automorphism of $\mathbb{D}.$ Is $C_{\phi}$  complex symmetric on $A^2_{\beta}$ when $\phi$ has order $N=2,3,\ldots , 5+2\beta  ?$ 
\\
\\
\textbf{Acknowledgments}
\\

The author is grateful to Professor Sahibzada Waleed Noor for the discussions and suggestions. This work is part of the doctoral thesis of the author.

\bibliographystyle{amsplain}

\end{document}